\newtheorem{theorem}{Theorem}[section]
\newtheorem{corollary}[theorem]{Corollary}
\newtheorem{proposition}[theorem]{Proposition}
\newtheorem{lemma}[theorem]{Lemma}
\theoremstyle{definition}
\newtheorem{example}[theorem]{Example}
\newcommand{\As}{\mathscr{A}}
\newcommand{\U}{\mathscr{U}}
\newcommand{\A}{\mathcal{A}}
\newcommand{\K}{\mathcal{K}}
\newcommand{\N}{\mathcal{N}}
\newcommand{\Q}{\mathcal{Q}}
\newcommand{\V}{\mathcal{V}}
\newcommand{\W}{\mathcal{W}}
\newcommand{\PG}{\mathrm{PG}}
\newcommand{\F}{\mathbb{F}}
\def\db{\displaybreak[3]}
\def\dbn{\displaybreak[3]\notag}
\begin{document}
\title
{New bounds for covering codes of radius 3 and codimension $3t+1$
\date{}
\thanks{The research of S. Marcugini and F. Pambianco was supported in part by the Italian
National Group for Algebraic and Geometric Structures and their Applications (GNSAGA -
INDAM) (Contract No. U-UFMBAZ-2019-000160, 11.02.2019) and by University of Perugia
(Project No. 98751: Strutture Geometriche, Combinatoria e loro Applicazioni, Base Research
Fund 2017-2019).}
}
\maketitle
\begin{center}
{\sc Alexander A. Davydov}\\
 \emph{E-mail address:} alexander.davydov121@gmail.com\medskip\\
 {\sc Stefano Marcugini and
 Fernanda Pambianco }\\
 {\sc\small Department of  Mathematics  and Computer Science,  Perugia University,}\\
 {\sc\small Perugia, 06123, Italy}\\
 \emph{E-mail address:} \{stefano.marcugini, fernanda.pambianco\}@unipg.it
\end{center}

\textbf{Abstract.}
The smallest possible length of a $q$-ary linear code of covering radius $R$ and codimension (redundancy) $r$ is called the length function and is denoted by $\ell_q(r,R)$. In this work, for $q$ \emph{an arbitrary prime power}, we obtain the following new constructive  upper bounds on $\ell_q(3t+1,3)$:
\begin{equation*}
\begin{split}
    &\bullet~\ell_q(r,3)\lessapprox \sqrt[3]{k}\cdot q^{(r-3)/3}\cdot\sqrt[3]{\ln q},~r=3t+1, ~t\ge1,~  q\ge\lceil\W(k)\rceil,\\
 &\phantom{\bullet~}18 <k\le20.339,~\W(k)\text{ is a decreasing function of }k ;\\
 &\bullet~\ell_q(r,3)\lessapprox \sqrt[3]{18}\cdot q^{(r-3)/3}\cdot\sqrt[3]{\ln q},~r=3t+1,~t\ge1,~ q\text{ large enough}.
  \end{split}
\end{equation*}
For $t = 1$, we use a one-to-one correspondence between codes of covering radius 3 and codimension 4,  and 2-saturating sets in the projective space $\mathrm{PG}(3,q)$. A new
construction providing sets of small size is proposed. The codes, obtained by geometrical methods, are taken as the starting ones in the lift-constructions (so-called ``$q^m$-concatenating constructions'')  to obtain infinite families of codes with radius 3 and growing codimension $r = 3t + 1$, $t\ge1$.
The new bounds are essentially better than the known ones.

\textbf{Keywords:}
Covering code, the length function, saturating set, elliptic quadric, projective space.

\textbf{Mathematics Subject Classification (2010).} 94B05, 51E21, 51E22

\section{Introduction}\label{sec:Intr}
Let $\F_{q}$ be the Galois field with $q$ elements. Let $\F_{q}^*=\F_{q}\setminus{0}$. Let $\F_{q}^{\,n}$ be the $n$-dimensional vector space over~$\F_{q}.$ A linear code in $\F_{q}^{\,n}$
 with codimension (redundancy) $r$, minimum distance $d$, and
covering radius $R$ is said to be an $[n, n- r,d]_q R$ code; $d$ is omitted when not relevant. All words in $\F_{q}^{\,r}$
 can be obtained as a linear combination of at most $R$ columns of a parity
check matrix of an $[n, n-r,d ]_q R$ code. Also, for this code, the space  $\F_{q}^{\,n}$ is covered by spheres of radius $R$ centered at the codewords.
For an introduction to coding theory, see \cite{HufPless,MWS}.

The minimum length $n$ such that an $[n, n-r ]_q R$ code exists is called \emph{the length function} and is denoted by $\ell_q(r,R)$. If covering radius and
codimension are fixed then the covering problem for codes is that of finding codes with small length. Codes investigated from the point
of view of the covering problem are called \emph{covering codes}. Studying covering codes is a classical combinatorial problem. Covering codes
 are connected with many areas of theory and practice, see e.g. \cite[Section 1.2]{CHLS-bookCovCod}, \cite[Introduction]{DMP-AMC2021}, \cite{Graismer-2023}. For an introduction to coverings of  Hamming spaces over finite fields and covering
codes, see \cite{Handbook-coverings,CHLS-bookCovCod,DGMP-AMC,Struik,LobstBibl}.

This paper is devoted to the upper bound on the length function $\ell_q(3t+1,3)$, $t\ge1$.

Let $\PG(N,q)$ be the $N$-dimensional projective space over the Galois field $\mathbb{F}_q$. To obtain bounds on $\ell_q(4,3)$ we use 2-\emph{saturating sets in $\PG(3,q)$}.
A point set $S\subset\PG(3,q)$ is
2-saturating if
 any  point of $\PG(3,q)$ belongs to a plane generated by three non-collinear points of $S$.
Let $s_q(3,2)$ be the smallest size of a 2-saturating set in $\PG(3,q)$.

If the positions of a column of a parity check matrix of an $[n,n-4]_q3$ code are treated as homogeneous coordinates of a point in $\PG(3,q)$ then the matrix defines a 2-saturating $n$-set  in $\PG(3,q)$, and vice versa. So, there is a \emph{one-to-one correspondence} between $[n,n-4]_q3$ codes and 2-saturating $n$-sets in $\PG(3,q)$. Therefore,
$\ell_q(4,3)=s_q(3,2).$

For an introduction to geometries over finite fields and their connections with coding theory, see \cite{EtzStorm2016,Giul2013Survey,Hirs,Hirs_PG3q,HirsStor-2001,LandSt,DGMP-AMC}.

Throughout the paper, $c$ is a constant  independent of $q$ but it is possible that $c$ is dependent on $r$ and $R$.
In \cite{BDGMP-R2R3CC_2019,DMP-R=3Redun2019}, \cite[Proposition 4.2.1]{denaux}, see also the references therein, the following lower bound is considered:
\begin{equation}\label{eq1:lowbound}
\ell_q(r,R)\ge cq^{(r-R)/R},~  R\text{ and }r\text{ fixed}.
\end{equation}
In \cite{DGMP-AMC}, the bound \eqref{eq1:lowbound} is given in another \mbox{(asymptotic)} form.
Let $t,s$ be integer. Let $q'$ be a prime power. In the literature, the bound \eqref{eq1:lowbound} is achieved in the following cases:
 \begin{align*}
 &r\ne tR,~  q=(q')^R~ \text{\cite{DGMP-AMC,denaux,denaux2,HegNagy}};
 R=sR',~ r=tR+s, ~q=(q')^{R'}~\text{\cite{DGMP-AMC,DMP-R=tR2019}};\\
 &r=tR,~q\text{ is an arbitrary prime power}~ \text{\cite{Dav95,DavOst-IEEE2001,DGMP-AMC,DMP-R=tR2019,DavOst-DESI2010}}.
  \end{align*}

In the general case, for arbitrary $r,R, q$, in particular when $r\ne tR$ and $q$ is an arbitrary prime power, the problem
of achieving the bound \eqref{eq1:lowbound} is open.

For $r=tR+1$, in \cite{BDGMP-R2Redun2016,BDGMP-R2Castle,BDGMP-R2R3CC_2019,BDMP-arXivR3_2018,DMP-R=3Redun2019,DMP-ICCSA2020,Bo-Sz-Ti,Nagy} (for $R=2,3$)
and \cite{DMP-AMC2021} (for $R\ge3$) upper bounds of the following form are obtained:
\begin{equation}\label{eq1:upbound}
\ell_q(tR+1,R)\le cq^{(r-R)/R}\cdot\sqrt[R]{\ln q},~ q\text{ is an arbitrary prime power}.
  \end{equation}
In the bounds \eqref{eq1:upbound}, the ``price'' of the non-restrict structure of $q$ is the factor $\sqrt[R]{\ln q}$.

In this paper, see Sections \ref{subsec:explicit} and \ref{subsec:BoundD}, for $q$ \emph{an arbitrary prime power}, we obtain the following new constructive  upper bounds on $\ell_q(3t+1,3)$ of the form \eqref{eq1:upbound}:
\begin{equation*}
 \begin{split}
& \bullet~\ell_q(r,3)\lessapprox \sqrt[3]{k}\cdot q^{(r-3)/3}\cdot\sqrt[3]{\ln q},~r=3t+1,~ t\ge1, ~ q\ge\lceil\W(k)\rceil,\\
&\phantom{\bullet~}18 <k\le20.339,~\W(k) \text{ is a decreasing function of }k;\\
 &\bullet~\ell_q(r,3)\lessapprox \sqrt[3]{18}\cdot q^{(r-3)/3}\cdot\sqrt[3]{\ln q},~r=3t+1,~t\ge1,~ q\text{ large enough}.
 \end{split}
  \end{equation*}

 We consider the case $t=1$ in projective geometry language. We propose a step-by-step algorithm
 obtaining a 2-saturating $n$-set  in $\PG(3,q)$ that is a subset of an elliptic quadric and corresponds to an $[n,n-4,4]_q3$ code, see Section \ref{sec:2sat constr}.
Estimates of the size of the obtained set give the bounds on $\ell_q(4,3)$, see Section \ref{sec:2sat}.

For $t\ge2$, we use the lift-constructions for covering codes.  These constructions are variants of the so-called ``$q^m$-concatena\-ting constructions'' proposed in \cite{Dav90PIT} and developed in \cite{Dav95,DGMP-AMC,DavOst-IEEE2001,DMP-IEEE2004,DavOst-DESI2010,DMP-AMC2021,DMP-ICCSA2020}, see also the references therein and \cite{Handbook-coverings}, \cite[Section~5.4]{CHLS-bookCovCod}. The $q^m$-concatenating constructions obtain infinite families of covering codes with growing codimension using a starting code with a small one.
We take the $[n,n-4,4]_q3$ codes with $t=1$ as the starting ones for the $q^m$-concatenating constructions and obtain infinite families of covering codes with growing codimension $r=3t+1$, $t\ge1$. These families provide the constructive upper bounds on $\ell_q(3t+1,3)$, $t\ge1$.

  The new bounds are essentially better than the best known ones of \cite{DMP-AMC2021}; for details
see the figures and the table in Section \ref{sec:2sat} and the figure and the relations in Section \ref{sec:compare}. In particular, in the region $14983\le q<5\cdot10^6$, the ratio of values of the known and new upper bounds lies in the region $2.167\ldots1.96$ and the asymptotic ratio is $\thickapprox1.89$.

The 2-saturating $n$-set  in $\PG(3,q)$, obtained in Section \ref{sec:2sat constr} of this work, corresponds to an $[n,n-4,4]_q3$ code with $r=4$ and $d=4$, whereas the known bound  for $r=4$ is provided by a code with $d=3$.

Note also that, for small $q$ in the region $7577 < q \le 7949$, we obtain by computer search new small 2-saturating sets in $\PG(3,q)$, see Section \ref{subsecComput}.

The paper is organized as follows. In Section \ref{sec:2sat constr}, a construction of 2-saturating sets in $\PG(3,q)$ is proposed. In Section \ref{sec:2sat}, estimates of sizes of 2-saturating sets in $\PG(3,q)$, obtained by the proposed construction, and new bounds on $\ell_q(4,3)$ are considered. In Section \ref{sec:r=3t+1}, with the help of the $q^m$-concatenating lift-constructions the new bounds on $\ell_q(3t+1,3)$, $t\ge1$, are obtained. Finally, in Section \ref{sec:compare}, the best known upper bounds of \cite{DMP-AMC2021} are given and are compared with the new ones.

\section{A construction of 2-saturating sets in $\PG(3,q)$}\label{sec:2sat constr}
We say that
a point $P$ of $\PG(3,q)$ is \emph{covered} by a point set $\K\subset\PG(3,q)$ if $P$ lies in a plane through 3 non-collinear  points of $\K$ or on a line through 2 points of $\K$.  In these cases, we say also that the set $\K$ \emph{covers} the point $P$.
Obviously, all points of $\K$ are covered by $\K$.

Let $\Q\subset\PG(3,q)$ be an elliptic quadric \cite{Hirs_PG3q}; then $\#\Q=q^2+1$. Let $B_u\in\PG(3,q)$ be a point of $\Q$. So, $\Q=\{B_1,B_2,\ldots,B_{q^2+1}\}.$
We construct a 2-saturating set as a \emph{subset of the elliptic quadric} by a step-by-step iterative process
which adds a new point to the current set in every step.

Let $w>0$ be a fixed integer. Consider the $(w+1)$-st step of the process.
This step  starts from a $w$-subset $\K_w=\{B_1,B_2,\ldots,B_w\}\subset\Q$ obtained in the previous $w$ steps.
Let $B_{w+1}\in \Q\setminus \K_{w}$ be the point that will be added to the subset in the $(w+1)$-st step, i.e. $\K_{w+1}=K_{w}\cup \{B_{w+1}\}$.
Let $\U_w$ and $\U_{w+1}$ be the subset of points of $PG(3,q)$ that are not covered by $\K_{w}$ and $\K_{w}\cup \{B_{w+1}\}$, respectively.
Clearly, $\U_{w+1}\subseteq\U_w$.

To obtain the next subset
$\K_{w+1}$, we can add to $\K_{w}$ any of $q^2+1-w$ points of $\Q\setminus \K_{w}$.
Thus, there exist $q^2+1-w$ distinct points that can be taken as $B_{w+1}$.

Let $\Delta(H)$ be the number of the \emph{new covered points} of $PG(3,q)$ in the $(w+1)$-st step after adding a point $H\in \Q\setminus \K_{w}$ to $\K_w$.
Let $\N(H)$ be the set of new points  of $PG(3,q)$ covered by $\K_{w}\cup \{H\}$ with $H\in \Q\setminus \K_{w}$. Obviously, $\#\N(H)=\Delta(H)$.
 Introduce the point multiset $\N_{w+1}^{\,\cup}$ such that
 \begin{equation*}
   \N_{w+1}^{\,\cup}\triangleq\bigcup\limits_{H\in \Q\setminus \K_{w}}\N(H),~\#\N^{\,\cup}_{w+1}=\sum\limits_{H\in \Q\setminus \K_{w}}\Delta(H).
 \end{equation*}

 Let $P\in \U_w$ be a point  that is not covered by $\K_{w}$.  Let $S_w(P)$ be the number of the inclusions of $P$ to the multiset $\N_{w+1}^{\,\cup} $ when one sequentially adds all points $H$ of $\Q\setminus\K_w$ to $\K_w$ to obtain variants $\K_{w+1}=\K_w\cup\{H\}$. Let $S_w^{\min}\triangleq\min\limits_{P\in \U_w}S_w(P)$. Then
 \begin{equation} \label{eq2:NcupSwP}
   \#\N^{\,\cup}_{w+1}=\sum_{P\in \U_w}S_w(P)\ge S_w^{\min}\cdot\#\U_w.
 \end{equation}

Let $\Delta _{w+1}^{\text{aver}}$ be the average number of new covered points in the  $(w+1)$-st step calculated over all $\#\Q\setminus\K_w= q^2+1-w$ possible  points $H$. By \eqref{eq2:NcupSwP},
\begin{equation}\label{eq2:aver_def}
    \Delta _{w+1}^{\text{aver}}=\frac{ \#\N^{\,\cup}_{w+1}}{q^2+1-w}\ge\frac{ S_w^{\min}\cdot\#\U_w}{q^2+1-w}.
\end{equation}
Obviously, there exists a point $H^*\in\Q\setminus\K_w$ providing the inequality $\Delta(H^*)\ge\left\lceil\Delta _{w+1}^{\text{aver}}\right\rceil$. \emph{We take this point $H^*$ as $B_{w+1}$.} Then, by \eqref{eq2:aver_def},
\begin{equation}\label{eq2:>=aver}
   \Delta(B_{w+1})=\#\U_w-\#\U_{w+1}\ge\left\lceil\Delta _{w+1}^{\text{aver}}\right\rceil\ge\left\lceil\frac{ S_w^{\min}\cdot\#\U_w}{q^2+1-w}\right\rceil\ge\frac{ S_w^{\min}\cdot\#\U_w}{q^2+1-w};
   \end{equation}
\begin{equation}
  \#\U_{w+1}\le \#\U_w-\left\lceil\frac{ S_w^{\min}\cdot\#\U_w}{q^2+1-w}\right\rceil\le\#\U_w\left(1-\frac{ S_w^{\min}}{q^2+1-w}\right).\label{eq2:>=aver2}
\end{equation}

The  iterative process ends when $\#\U_{w+1}\le 1$. Then one should add one point of $\Q$ to $\K_w$ to obtain a 2-saturating set.

\begin{theorem}\label{the2:d=4}
  The $2$-saturating $n$-set obtained by the construction above corresponds to an $[n,n-4,4]_q3$ code with minimum distance $d=4$.
\end{theorem}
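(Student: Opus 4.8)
The plan is to extract the length, the redundancy and the covering radius of the code straight from the geometric description, and then to bracket the minimum distance from below and from above.

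First I would invoke the one-to-one correspondence recalled in Section~\ref{sec:Intr}: since the construction outputs a $2$-saturating $n$-set $\K_n\subset\PG(3,q)$, its points, read in homogeneous coordinates, form the columns of a parity-check matrix of a code with redundancy $r=4$ and covering radius at most $3$. The covering radius is exactly $3$ because $\K_n$, being of size $o(q)$ (indeed of order $q^{1/3}\sqrt[3]{\ln q}$, see Section~\ref{sec:2sat}), is not $1$-saturating: the secant lines of $\K_n$ cover at most $\binom{n}{2}(q+1)$ points of $\PG(3,q)$, far fewer than $q^3+q^2+q+1$. Hence $\K_n$ corresponds to an $[n,n-4]_q3$ code, and it remains only to determine $d$.

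Next, for the lower bound $d\ge 4$: the minimum distance of the code equals the least number of linearly dependent columns of its parity-check matrix, i.e.\ the least number of projectively dependent points of $\K_n$. The points are distinct and nonzero, so $d\ge 3$; and since $\K_n$ lies on the elliptic quadric $\Q$, which is an ovoid and hence a cap, no three of its points are collinear, so no three columns are dependent and $d\ge 4$.

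Finally — the crux — the upper bound $d\le 4$, for which I must exhibit four coplanar points of $\K_n$. A counting or extremal argument will not suffice, since $n$ lies far below the maximal size $q+1$ of an arc of $\PG(3,q)$, so $\K_n$ could a priori be an arc (giving $d=5$); the bound $d\le 4$ has to be read off from the construction itself. I would look at the last point $H$ appended by the construction and set $\K'=\K_n\setminus\{H\}$, the set reached just before the final step, which leaves at most one point of $\PG(3,q)$ uncovered. If $\K'$ covers everything, it covers $H$; otherwise $H$ is appended to cover the single remaining point $P^*$, and since one is free to choose $H$ among all admissible points of $\Q$ (which does not change $n$) one can take $H\ne P^*$, for instance a further point of the conic cut on $\Q$ by the secant plane through $P^*$ and two points of $\K'$, so that $H$, being distinct from the only uncovered point, is again covered by $\K'$. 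Now $H\in\Q$ is covered by $\K'\subseteq\Q$; a line joining two points of $\K'$ is a secant of $\Q$ and contains no point of $\Q$ beyond those two, so $H$ is not covered via a line and must lie in a plane through a non-collinear triple $\{A,B,C\}\subseteq\K'$. Then $\{H,A,B,C\}\subseteq\K_n$ are four coplanar, hence linearly dependent, columns, giving $d\le 4$; together with $d\ge 4$ this yields $d=4$ and identifies the code as an $[n,n-4,4]_q3$ code. The main obstacle is precisely this last step: retaining enough freedom in the choice of the appended point (and disposing of the degenerate case where exactly one point is still uncovered) so that the point is provably covered by $\K'$, and therefore provably forms a coplanar quadruple with three points of $\K'$; the finitely many small values of $q$ that a genericity argument might leave aside are immaterial, since the bounds of the paper concern large $q$.
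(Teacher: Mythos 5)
Your proof is correct, and it does strictly more than the paper's own proof, which consists of the single observation that all points of the set lie on the elliptic quadric $\Q$. That observation is exactly your lower bound: $\Q$ is a cap, no three of its points are collinear, hence no three columns of the parity-check matrix are dependent and $d\ge4$; the paper leaves the codimension and the covering radius to the correspondence with $2$-saturating sets recalled in the Introduction, and it does not address $d\le4$ at all. You are right that $d\le4$ does not follow from the cap property or from counting (a set of this size could a priori be an arc of $\PG(3,q)$, giving $d=5$), so it must be read off from the construction, and your argument is sound: a bisecant of $\Q$ contains no point of $\Q$ beyond its two points of $\K'$, so any point of $\Q\setminus\K'$ covered by $\K'$ is covered by a plane through three non-collinear points of $\K'$; since $\K'$ leaves at most one point uncovered and the final step leaves enough freedom to choose the appended point $H$ distinct from that point while still lying on a secant plane through it and two points of $\K'$, the quadruple $\{H,A,B,C\}$ is coplanar and $d\le4$. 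Two small remarks: for the claim $R=3$ you do not need the counting estimate $\binom{n}{2}(q+1)\ll\theta_{3,q}$ — no point of $\Q\setminus\K_n$ lies on any bisecant of $\K_n$ at all, so $\K_n\ne\Q$ already rules out $1$-saturation; and your appeal to the size bound $n=O(\sqrt[3]{q\ln q})$ quietly imports results proved only later in the paper, whereas the direct bisecant argument avoids this. In short, for the direction the paper actually proves you use the same idea (the quadric is a cap), and for the direction the paper silently omits you supply a valid argument.
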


\begin{proof}
  All the points of the $2$-saturating $n$-set belong to the elliptic quadric $\Q$.
\end{proof}
\section{Estimates of sizes of 2-saturating sets in $\PG(3,q)$ and new bounds on $\ell_q(4,3)$}\label{sec:2sat}
 Let $\theta_{N,q}=(q^{N+1}-1)/(q-1)$ be the number of points in the projective space $\PG(N,q)$.
Obviously, any three points of $\Q$ cover some plane; therefore
\begin{equation}\label{eq3:U3}
  \#\U_3=\theta_{3,q}-\theta_{2,q}=q^3.
\end{equation}

We consider our new bounds in the form \eqref{eq1:upbound}.
For this, note that if $r=4$, $R=3$, then  $q^{(r-R)/R}\cdot\sqrt[R]{\ln q}=\sqrt[3]{q\ln q}$.

\subsection{Estimates of $S_w^{\min}$}
 Many important properties of the elliptic quadric are given in \cite[Section 16]{Hirs_PG3q}. In particular, a plane containing at least two points of $\Q$ intersects $\Q$ in a $(q+1)$-arc.
 Through every point of $\Q$ we have one tangent plane and $q(q+1)$ secant planes. We have no planes external to $\Q$.

\begin{lemma}\label{lem2:intersect arcs}
Let $\A_1$ and $\A_2$ be two $(q+1)$-arcs that are intersections of $\Q$ and planes $\pi_1$ and $\pi_2$, respectively.  Then  $\#(\A_1\cap\A_2)=2,1,0$ if $\pi_1\ne\pi_2$ and the intersection line $\pi_1\cap\pi_2$  is a bisecant, a tangent, and an external line to $\Q$, respectively.
\end{lemma}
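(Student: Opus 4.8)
The plan is to reduce the statement to the elementary observation that the intersection of the two arcs is exactly the intersection of the common line with the quadric. First I would record that, by definition, $\A_i=\pi_i\cap\Q$ for $i=1,2$. Since $\pi_1\ne\pi_2$ are distinct planes in $\PG(3,q)$, they meet in a line $\ell=\pi_1\cap\pi_2$. A point $P$ lies in $\A_1\cap\A_2$ if and only if $P\in\Q$ and $P\in\pi_1$ and $P\in\pi_2$, i.e.\ if and only if $P\in\Q\cap(\pi_1\cap\pi_2)=\Q\cap\ell$. Hence
\begin{equation*}
  \A_1\cap\A_2=\Q\cap\ell ,
\end{equation*}
and the problem is reduced to counting $\#(\Q\cap\ell)$.

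Next I would invoke the standard classification of lines of $\PG(3,q)$ with respect to the elliptic quadric $\Q$ (see \cite[Section~16]{Hirs_PG3q}): since $\Q$ is an ovoid, it contains no line, so every line meets $\Q$ in $0$, $1$, or $2$ points, and these three cases define, respectively, the external, tangent, and bisecant (secant) lines to $\Q$. Applying this to $\ell=\pi_1\cap\pi_2$ immediately gives $\#(\A_1\cap\A_2)=\#(\Q\cap\ell)=2,1,0$ according as $\ell$ is a bisecant, a tangent, or an external line, which is the assertion of the lemma.

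There is essentially no obstacle here: the only two ingredients are the set-theoretic identity $\A_1\cap\A_2=\Q\cap(\pi_1\cap\pi_2)$, which is immediate, and the line–quadric classification, which is a known property of $\Q$ recalled just before the lemma. The one point worth stating explicitly is that no line is contained in $\Q$ (equivalently, $\Q$ is an ovoid), so that the list of possibilities for $\#(\Q\cap\ell)$ is exhausted by $\{0,1,2\}$; with that remark in place the proof is complete.
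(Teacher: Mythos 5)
Your proof is correct and follows exactly the same route as the paper: the set-theoretic identity $\A_1\cap\A_2=\Q\cap(\pi_1\cap\pi_2)$ followed by the classification of lines with respect to the elliptic quadric. The paper states this in one line; you merely spell out the details, including the useful remark that $\Q$ contains no line.
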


\begin{proof}
  It easy to see that $A_1 \cap A_2  = \pi_1 \cap \pi_2 \cap \Q$
and the lemma follows immediately.
\end{proof}

\begin{lemma}\label{lem3:Sw}
  Let a  point $P$ of $\PG(3,q)$ be not covered by $\K_{w}$. It is possible $P\in\Q$ and $P\notin\Q$ as well. Then
  \begin{align}\label{eq3:SwP a}
 &  S_w^{\min}\ge \binom{w}{2}\left(q-\binom{w}{2}\right)\text{ if }2\binom{w}{2}-1\le q;\db\\
 &S_w^{\min}\ge\frac{q^2-1}{4} \text{ if }2\binom{w}{2}-1> q\text{ and }q\text{ is odd};\label{eq3:SwP b}\db\\
 &S_w^{\min}\ge\frac{q^2}{4}  \text{ if }2\binom{w}{2}-1> q\text{ and }q\text{ is even}.\label{eq3:SwP c}
  \end{align}
\end{lemma}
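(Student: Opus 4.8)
The plan is to express $S_w^{\min}$ as the size of a union of arcs lying on $\Q$, and then estimate that union by inclusion--exclusion, using Lemma~\ref{lem2:intersect arcs} to control the pairwise intersections.

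First I would set up the reduction. Fix a point $P$ not covered by $\K_w$ (the cases $P\in\Q$ and $P\notin\Q$ are handled together) and assume $w\ge2$, the claim being trivial otherwise. Since $P$ is uncovered, $P$ lies on no secant line $\langle B_i,B_j\rangle$ of $\K_w$ and on no plane $\langle B_i,B_j,B_k\rangle$ spanned by three of its points; hence for every pair $\{B_i,B_j\}$ the set $\pi_{ij}=\langle P,B_i,B_j\rangle$ is a genuine plane, it contains at least two points of $\Q$, so $\A_{ij}:=\pi_{ij}\cap\Q$ is a $(q+1)$-arc, one has $\A_{ij}\cap\K_w=\{B_i,B_j\}$ (a third $\K_w$-point in $\pi_{ij}$ would put $P$ on a plane through three points of $\K_w$), and the $\binom w2$ planes $\pi_{ij}$ are pairwise distinct. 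A short case analysis of the definition of ``covered'' shows that for $w\ge2$ a point $H\in\Q\setminus\K_w$ contributes to $S_w(P)$ exactly when $H\in\bigcup_{i<j}\A_{ij}$ (the line-through-two-points case is subsumed, because a secant line through $P$ would cover $P$ already). Thus
\[
S_w(P)=\#\Bigl(\bigcup_{i<j}(\A_{ij}\setminus\K_w)\Bigr).
\]

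For the bound \eqref{eq3:SwP a} I would apply the Bonferroni inequality to this union. Each arc meets $\K_w$ in exactly two points, so $\#(\A_{ij}\setminus\K_w)=q-1$; and by Lemma~\ref{lem2:intersect arcs} two distinct arcs meet in at most two points. Hence
\[
S_w(P)\ \ge\ \binom w2(q-1)-2\binom{\binom w2}{2}\ =\ \binom w2\Bigl(q-\binom w2\Bigr).
\]
This inequality holds with no restriction on $w$, but the right-hand side is the useful lower bound precisely in the range $2\binom w2-1\le q$ (beyond it the expression falls below $\lfloor q^2/4\rfloor$), which is why it is recorded under that hypothesis.

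For \eqref{eq3:SwP b}--\eqref{eq3:SwP c} — the substantive part — the crude Bonferroni bound deteriorates once $\binom w2$ passes $q/2$, so a finer argument is needed, and this is where the parity of $q$ enters. A first sharpening: when two arcs $\A_{ij}$ and $\A_{ik}$ share an index, one of their (at most two) common points is the $\K_w$-point $B_i$, so their contribution to the subtracted sum is at most $1$; this already upgrades the estimate to roughly $\binom w2\bigl(q-\binom w2+w-2\bigr)$. To cover arbitrarily large $w$ I would project $\Q$ from $P$: for $P\notin\Q$ this is a $2$-to-$1$ map onto the closed interior of a conic in $\PG(2,q)$ (the tangent structure, hence this conic, behaving differently when $q$ is even), while for $P\in\Q$ stereographic projection identifies $\Q\setminus\{P\}$ with $\mathrm{AG}(2,q)$; under such a map ``$P$ uncovered by $\K_w$'' becomes ``the image of $\K_w$ is an arc'' and ``$H$ works'' becomes ``$\phi(H)$ lies on a secant of that arc'', so $S_w(P)$ is controlled, up to an $O(w)$ correction from the fibres over $\K_w$, by the number of points of $\PG(2,q)$ on secants of an arc measured against the conic — and it is exactly here that the two clean values $\tfrac{q^2-1}{4}$ ($q$ odd) and $\tfrac{q^2}{4}$ ($q$ even) should appear. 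I expect this last planar estimate to be the main obstacle: a naive restriction to a sub-arc of size $\approx\sqrt q$ only yields $\lfloor q^2/4\rfloor-O(q)$, so one genuinely has to exploit how secants of an arc meet the interior of a conic in each characteristic to get the sharp floor uniformly in $w$.
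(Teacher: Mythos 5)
Your setup and your proof of \eqref{eq3:SwP a} coincide with the paper's: form the $\binom w2$ pairwise distinct planes $\pi(B_i,B_j,P)$, note each arc $\A_{ij}$ meets $\K_w$ exactly in $\{B_i,B_j\}$, and apply Bonferroni with pairwise intersections bounded by $2$ via Lemma~\ref{lem2:intersect arcs}. But for \eqref{eq3:SwP b}--\eqref{eq3:SwP c}, which you yourself call the substantive part, your proposal is not a proof: it is a sketch of a projection argument (stereographic projection of $\Q$ from $P$, interior of a conic, characteristic-dependent tangent structure) whose key planar estimate you explicitly leave open as ``the main obstacle''. That is a genuine gap, and it stems from looking for the wrong kind of restriction. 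You consider restricting $\K_w$ to a sub\emph{set} of about $\sqrt q$ points, which forces the number of surviving arcs to be a binomial coefficient $\binom m2$ and hence misses the optimal value by $O(q)$. The paper instead discards \emph{arcs}, not points: from the $\binom w2>\frac{q+1}2$ arcs in $\As_w(P)$ it keeps an arbitrary sub-collection of exactly $n$ of them and applies the very same Bonferroni bound, giving $S_w(P)\ge n(q-1)-2\binom n2=n(q-n)$ for any admissible $n$. Choosing $n=\frac{q+1}2$ for $q$ odd and $n=\frac q2$ for $q$ even (the integers maximizing $n(q-n)$ subject to $(q-1)-2(n-1)\ge0$) yields exactly $\frac{q^2-1}4$ and $\frac{q^2}4$. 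In particular, the parity of $q$ enters only through the integrality of the maximizer of $n(q-n)$, not through any geometric difference between odd and even characteristic; no projection, conic, or arc-secant count is needed.
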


\begin{proof}Let
$\pi(T,Q,R)$ be the plane through three \emph{distinct} points $T,Q,R$ of $PG(3,q)$. Let $\pi_w=\{\pi(B_i,B_j,B_k)|1\le i<j<k\le w\}$
 be the multiset of $\binom{w}{3}$ planes through three distinct points of $\K_w$.
For $1\le i<j\le w$, we have $\pi(B_i,B_j,P)\notin\pi_w$ otherwise the point $P$ would be covered by $\K_{w}$.

We consider the $(q+1)$-arc $\A_{i,j}(P)\triangleq\pi(B_i,B_j,P)\cap\Q$; then $\A_{i,j}(P)\cap K_{w}=\{B_i,B_j\}$.
Every point $\A_{i,j}(P)\setminus\{B_i,B_j\}$ provides the inclusion of $P$ into $\N^{\,\cup}_{w+1}$ when one sequentially adds all points $H$ of $\Q\setminus\K_w$ to $\K_w$ to obtain variants $\K_{w+1}=\K_w\cup{H}$. If $P\in\Q$ then one of these points is $P$. So, for a fixed pair $(i,j)$, we have $q-1$ inclusions of $P$  into~$\N^{\,\cup}_{w+1}$.

To estimate $S_w(P)$, we should consider all the $\binom{w}{2}$ pairs $(i,j)$ and take into account all the intersections points of the arcs $\A_{i,j}(P)$ for distinct $(i,j)$ with each other.
Let $\Pi_w(P)\triangleq \{\pi(B_i,B_j,P)\;|\;1\le i<j\le w\}$. If two planes from $\Pi_w(P)$ coincide with each other then we have a plane containing $P$ and three or four points of $\K_w$; it implies that $P$ is covered by $\K_w$, contradiction. So, all the planes of $\Pi_w(P)$ are pairwise distinct.

 The set $\Pi_w(P)$ defines the $\binom{w}{2}$-set $\As_w(P)$ of the $(q-1)$-arcs such that $\As_w(P)\triangleq\{(\pi(B_i,B_j,P)\cap\Q)\setminus\{B_i,B_j\}\;|\;1\le i<j\le w\}$.

 By Lemma \ref{lem2:intersect arcs}, two arcs  from $\As_w(P)$ intersect each other in at most two points. To obtain a lower bound of $S_w(P)$ we can assume that any two arcs of $\As_w(P)$ intersect each other in two points. Moreover, for $P\notin\Q$, we can assume that all intersection points in all pair of the arcs are distinct. This is the worst case. But, for $P\in\Q$, one intersection point is the same for all pairs, it is $P$. Therefore, the lower bound for $P\notin\Q$ is less than the one for $P\in\Q$ and we can consider only $P\notin\Q$.

 Let $P\notin\Q$.
  We take $n$ arcs from $\As_w(P)$. We assume that every arc intersects all $n-1$ other arcs in two points and all the intersection points are distinct; it is the worst case for the value of  $S_w(P)$.  As $(q-1)-2(n-1)$ must be $\ge0$, the considered case is possible if $2n-1\le q$. In all $n$ arcs, the total
number of the intersection points  is $2\binom{n}{2}=n(n-1)$. The total number of distinct points in the union of  $n$ the arcs is $n(q-1)-n(n-1)=n(q-n)$.

If $2\binom{w}{2}-1\le q$ we put $n=\binom{w}{2}$ that proves \eqref{eq3:SwP a}.

 Let $2\binom{w}{2}-1> q$.

Let $q$ be odd. We put $n=(q+1)/2$ that implies $(q-1)-2(n-1)=0$, $n(q-n)=(q^2-1)/4$. So, \eqref{eq3:SwP b} is proved.

If $q$ is even, we put $n=q/2$ that gives $(q-1)-2(n-1)=1$, $n(q-n)=q^2/4$, and proves \eqref{eq3:SwP c}.
\end{proof}

\subsection{Implicit Bound A}\label{subsec:bndA}
By above, in particular by \eqref{eq2:>=aver2} and \eqref{eq3:U3}, the implicit Bound A can be given as follows:
\begin{equation}\label{eq3:BoundA}
 \#\U_3=q^3,~ \#\U_{w+1}= \#\U_w-\left\lceil\frac{ S_w^{\min}\cdot\#\U_w}{q^2+1-w}\right\rceil, ~\#\U_{w^\text{A}_q+1}\le1,~n^\text{A}_{4,q}\triangleq w^\text{A}_q+1.
 \end{equation}
We put $w=3,\#\U_3=q^3$, and then, sequentially increasing $w$ by 1, we calculate $\#\U_{w+1}$, as it is written in \eqref{eq3:BoundA}, until for some $w$ we obtain  $\#\U_{w+1}\le1$; we denote this $w$ as $w^\text{A}_q$; also, let $n^\text{A}_{4,q}\triangleq w^\text{A}_q+1$. We have obtained the implicit upper Bound A
\begin{align}\label{eq3:BoundA2}
s(2,3)=\ell_q(4,3)\le n^\text{A}_{4,q}.
\end{align}
For illustration and comparison, the Bound A in the form $n^\mathrm{A}_{4,q}/\sqrt[3]{q\ln q}$
is shown by the second curve in Figure \ref{fig:bound ShortBnd}, in the region  $7951\le q<10^5$, and the bottom curve in Figure~\ref{fig:LongBnd}, where $10^5< q<5\cdot10^6$. The curves are obtained using $S_w$ from Lemma \ref{lem3:Sw}. By Figure~\ref{fig:bound ShortBnd}, for $7951\le q<10^5$, the Bound A  takes the values $3.38\gtrapprox n^\mathrm{A}_{4,q}/\sqrt[3]{q\ln q}\gtrapprox2.79$. By Figure~\ref{fig:LongBnd}, for $10^5< q<5\cdot10^6$, the Bound A takes the values $2.79\gtrapprox n^\mathrm{A}_{4,q}/\sqrt[3]{q\ln q}\gtrapprox2.63$. Other curves in the figures will be explained below.
\begin{figure}[htbp]
\includegraphics[width=\textwidth]{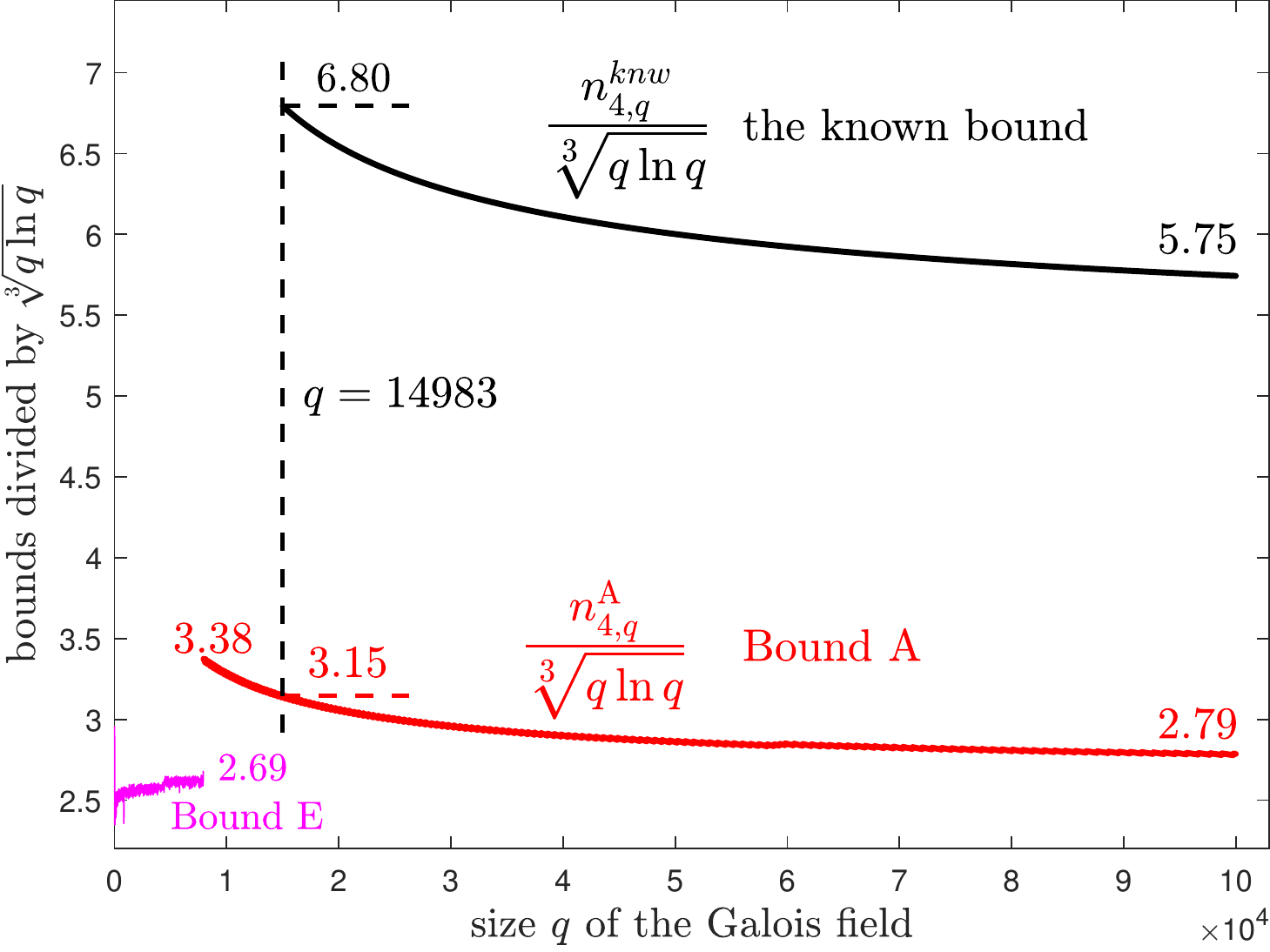}
\caption{Upper bounds on the length function $\ell_q(4,3)$ divided by $\sqrt[3]{q\ln q}$: implicit Bound~A \eqref{eq3:BoundA}--\eqref{eq3:BoundA2} for $7951\le q<10^5$ (\emph{the second curve}), computer Bound E \eqref{eq3:comp_bnd} for $13\le q\le7949$ (\emph{the bottom curve}) vs the known bound \eqref{eq5:lqr3}--\eqref{eq5:lqr3a} for $14983\le q<10^5$ (\emph{the top curve})}
 \label{fig:bound ShortBnd}
\end{figure}

\begin{figure}[htbp]
\includegraphics[width=\textwidth]{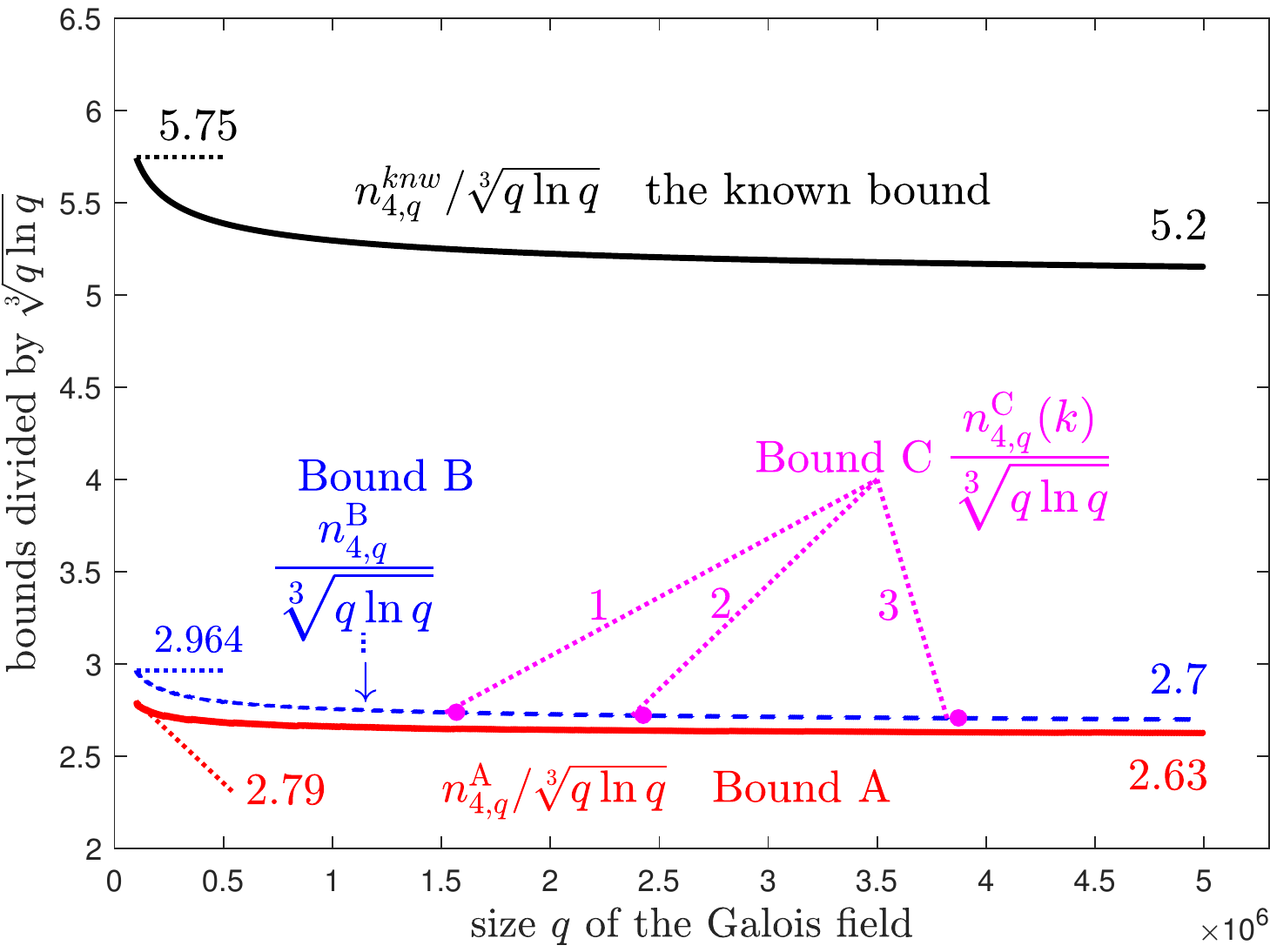}
\caption{Upper bounds on the length function $\ell_q(4,3)$ divided by $\sqrt[3]{q\ln q}$: implicit Bound~A \eqref{eq3:BoundA}--\eqref{eq3:BoundA2} (\emph{the bottom, solid  curve}), implicit Bound B \eqref{eq3:boundB}--\eqref{eq3:bndB_b} (\emph{the second, dashed curve}), and explicit Bound C (\emph{points $1$, $2$, and $3$ correspond to $n^\text{C}_{4,q}(k)/\sqrt[3]{ q\ln q}$ with $k=20.339,\;20$, and $19.7$, by Table $\ref{tab1}$}) vs the known bound \eqref{eq5:lqr3}--\eqref{eq5:lqr3a} (\emph{the top, solid  curve}), $10^5< q<5\cdot10^6$}
 \label{fig:LongBnd}
\end{figure}

\subsection{Implicit Bound B}\label{subsec:BoundB}
By \eqref{eq2:>=aver2} and \eqref{eq3:U3},
\begin{align} \label{eq3:>=aver3}
 &\#\U_{w+1}\le \#\U_w\left(1-\frac{ S_w^{\min}}{q^2+1-w}\right)\le q^3\prod_{j=3}^w\left(1-\frac{ S_j^{\min}}{q^2+1-w}\right).
 \end{align}
Similarly to Section \ref{subsec:bndA}, we put $w=3,\#\U_3=q^3$;  then, sequentially increasing $w$ by 1, we calculate $\#\U_{w+1}$ by \eqref{eq3:>=aver3}, until for some $w$ we obtain   $\#\U_{w+1}\le1$.

Let $q_0$ be a value such that for $q\ge q_0$  in all steps of the considered process we have $2\binom{j}{2}-1\le q$ and use the variant of $S_j^{\min}$ as in \eqref{eq3:SwP a}. Then, for $q\ge q_0$ we have
\begin{align}\label{eq2:Delta2}
&\#\U_{w+1}\le\#\U_{3}\prod_{j=3}^w\left(1-\frac{ \binom{j}{2}\left(q-\binom{j}{2}\right)}{q^2+1-j}\right)=q^3f_q(w),\db\\
&f_q(w)\triangleq\prod_{j=3}^w\left(1-\frac{\binom{j}{2}\left(q-\binom{j}{2}\right)}{q^2+1-j}\right).\label{eq3:fqw}
\end{align}

\begin{lemma}\label{lem3:BoundsB1B2}
 Let $q\ge q_0$. To provide $\#\U_{w+1}\le q^3f_q(w)\le1$ it is sufficient to take $w$ satisfying  the inequality
  \begin{align}
    &(w-1)^3-\frac{0.3w^5}{q}\ge18q\ln q. \label{eq3:boundB}
\end{align}
\end{lemma}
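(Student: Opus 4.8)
The plan is to reduce the product $f_q(w)$ to something manageable by taking logarithms, using the elementary inequality $\ln(1-x)\le -x$ valid for $x<1$. Since we want $q^3 f_q(w)\le 1$, i.e. $\ln f_q(w)\le -3\ln q$, it suffices to show
\begin{equation*}
\sum_{j=3}^{w}\frac{\binom{j}{2}\bigl(q-\binom{j}{2}\bigr)}{q^2+1-j}\ge 3\ln q .
\end{equation*}
First I would bound the denominator crudely from above by $q^2$ (for $q\ge q_0$ this only weakens the bound), turning the left-hand side into $\frac1{q^2}\sum_{j=3}^{w}\binom{j}{2}\bigl(q-\binom{j}{2}\bigr)=\frac1{q}\sum_{j=3}^{w}\binom{j}{2}-\frac1{q^2}\sum_{j=3}^{w}\binom{j}{2}^2$. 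So the target becomes
\begin{equation*}
\frac1{q}\sum_{j=3}^{w}\binom{j}{2}-\frac1{q^2}\sum_{j=3}^{w}\binom{j}{2}^2\ \ge\ 3\ln q,
\end{equation*}
and multiplying through by $q$ and rearranging, it suffices that $\sum_{j=3}^{w}\binom{j}{2}-\frac1{q}\sum_{j=3}^{w}\binom{j}{2}^2\ge 3q\ln q$; the factor of $6$ discrepancy with \eqref{eq3:boundB} is absorbed in the next step, since $\binom{j}{2}=\tfrac{j(j-1)}{2}$.

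Next I would evaluate the two sums in closed form. We have $\sum_{j=3}^{w}\binom{j}{2}=\binom{w+1}{3}=\tfrac{(w+1)w(w-1)}{6}$, and for the second sum $\sum_{j=3}^{w}\binom{j}{2}^2=\tfrac14\sum_{j=3}^{w}j^2(j-1)^2$, which is a degree-$5$ polynomial in $w$; its leading term is $\tfrac{w^5}{20}$, so $\tfrac14\sum j^2(j-1)^2 \le \tfrac{w^5}{20}+(\text{lower order})$. For the lower bound on the first sum I would use $\tfrac{(w+1)w(w-1)}{6}\ge \tfrac{(w-1)^3}{6}$ (clear for $w\ge1$). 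Dividing the whole target inequality by $6$ and feeding these estimates in, the left side is bounded below by $\tfrac{(w-1)^3}{6}-\tfrac{1}{q}\cdot\tfrac{w^5}{20}\cdot\tfrac{1}{\text{(the }6\text{ from }\binom{j}{2}^2=\cdots)}$; tracking the constants, the coefficient on $w^5/q$ works out to something slightly below $0.3$, and the right side is $3q\ln q$ after dividing by the $6$. This is exactly \eqref{eq3:boundB}; the margin between the true coefficient (roughly $\tfrac{1}{20}$ scaled appropriately) and $0.3$ is what lets me discard all the lower-order polynomial terms in $w$ without further fuss.

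The main obstacle, and the only place requiring genuine care rather than bookkeeping, is controlling the lower-order terms in $\sum_{j=3}^{w}\binom{j}{2}^2$ and verifying that the slack coefficient $0.3$ (versus the exact leading coefficient) really does dominate them uniformly for all $w$ in the relevant range and all $q\ge q_0$. Concretely, I would need $w=O(\sqrt{q\ln q})$ a priori (which follows from the shape of \eqref{eq3:boundB} itself), so that $w^4/q = O(\sqrt{q}\,\ln q\,)$ and the quartic and lower terms are genuinely smaller than the $0.3\,w^5/q$ cushion minus the exact $\tfrac{1}{120}w^5/q$ main term. I would also double-check that replacing $q^2+1-j$ by $q^2$ in the denominator is legitimate — it is, since $q^2+1-j\le q^2$ for $j\ge1$, and this goes the right way. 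Once these routine but slightly delicate estimates are in place, the inequality \eqref{eq3:boundB} implies $q^3 f_q(w)\le 1$, hence $\#\U_{w+1}\le 1$, which is the claim.
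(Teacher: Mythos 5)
Your proposal is correct and follows essentially the same route as the paper: apply $1-x\le e^{-x}$ (equivalently $\ln(1-x)\le -x$), enlarge the denominator $q^2+1-j$ to $q^2$, evaluate $\sum_{j=3}^w\binom{j}{2}$ and $\sum_{j=3}^w\binom{j}{2}^2$ in closed form, and keep only the leading terms $\tfrac{(w-1)^3}{6}$ and $\tfrac{w^5}{20}$, which after multiplying by $6q$ gives exactly \eqref{eq3:boundB} with $0.3=6/20$. The ``delicate'' point you flag is actually a non-issue: the closed forms are $\sum_{j=3}^w j(j-1)=\tfrac{w^3-w-6}{3}>\tfrac{(w-1)^3}{3}$ and $\sum_{j=3}^w j^2(j-1)^2=\tfrac{w^5}{5}-\tfrac{w^3}{3}+\tfrac{2w}{15}-4<\tfrac{w^5}{5}$, so all lower-order terms have the favorable sign and no cushion in the coefficient is needed.
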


\begin{proof}
By \eqref{eq3:fqw} and the inequality $1-x<\exp(-x)$, we have
\begin{align*}
 &f_q(w)=\prod_{j=3}^w\left(1-\frac{j(j-1)\left(2q-j(j-1)\right)}{4(q^2+1-j)}\right)<\prod_{j=3}^w\exp\left(-\frac{j(j-1)(2q-j(j-1))}{4(q^2+1-j)}\right)\db\\
&< \prod_{j=3}^w\exp\left(-\frac{j(j-1)(2q-j(j-1))}{4q^2}\right)=\exp\left(-\sum_{j=3}^w\frac{j(j-1)(2q-j(j-1))}{4q^2}\right).
 \end{align*}
 After simple transformations, using the results of \cite[Section 1.2.3.1]{HandbookMathForm}, we obtain
\begin{align*}
 & \sum_{j=3}^w j(j-1)=-2+\sum_{j=1}^w j(j-1)=\frac{w^3-w-6}{3}>\frac{(w-1)^3}{3};\db\\
  &\sum_{j=3}^wj^2(j-1)^2=-4+\sum_{j=1}^w j^2(j-1)^2=\frac{w^5}{5}-\frac{w^3}{3}+\frac{2w}{15}-4<\frac{w^5}{5};\db\\
  &f_q(w)<\exp\left(-\frac{w^3-w-6}{6q}+\frac{3w^5-5w^3+2w-60}{60q^2}\right)<\exp\left(-\frac{(w-1)^3}{6q}+\frac{w^5}{20q^2}\right).
\end{align*}
Taking the logarithm of both the parts of the inequality $q^3f_q(w)\le1$, we obtain
\begin{align*}
 &3\ln q-\frac{(w-1)^3}{6q}+\frac{w^5}{20q^2}\le0,
 \end{align*}
 that implies the assertion.
\end{proof}

Let $w^\text{B}_q$ be the smallest integer satisfying the inequality \eqref{eq3:boundB} under the condition  $2\binom{w}{2}-1\le q$. Let $n^\text{B}_{4,q}\triangleq w^\text{B}_q+1$, where ``$+1$'' takes into account that $\#\U_{w+1}\le1$.  We have obtained the implicit upper Bound B:
\begin{align}\label{eq3:bndB_b}
  s_q(2,3)=\ell_q(4,3)\le n^\text{B}_{4,q}.
\end{align}

 Now we will estimate the value of $q_0$.
  If $w^2-w-1\le q $, then $2\binom{w}{2}-1\le q$. So, we can consider $w\le \sqrt{q}$. Let $\delta(q)$  be the difference between the left  and right parts of the inequality  \eqref{eq3:boundB}  under condition that $w=\sqrt{q}$. In other words,
  \begin{align*}
  &\delta(q)\triangleq  (\sqrt{q}-1)^3-0.3q\sqrt{q}-18q\ln q.
  \end{align*}
  Considering the corresponding derivatives, it can be shown that for $q\ge 88274$ we have $\delta(q)>0$. For simplicity of presentation, we can put $q_0=10^5$.

For illustration and comparison, the Bound B in the form $ n^\mathrm{B}_{4,q}/\sqrt[3]{q\ln q}$
is shown by the second,  dashed curve in Figure \ref{fig:LongBnd}, where $q_0=10^5< q<5\cdot10^6$.  By Figure~\ref{fig:LongBnd}, for $10^5< q<5\cdot10^6$, the Bound B takes the values $2.964\gtrapprox n^\mathrm{B}_{4,q}/\sqrt[3]{q\ln q}\gtrapprox2.7$.

\subsection{Explicit Bound C}\label{subsec:explicit}
We will find the solution of the inequality \eqref{eq3:boundB} in the form $w=\lceil \sqrt[3]{kq\ln q}\,\rceil$, where $k>0$  is independent of $q$. For the convenience of research we write $w=\sqrt[3]{k q\ln q}+1$. The inequality  \eqref{eq3:boundB}  takes the form
\begin{align} \label{eq3:boundB2forexplicit}
(k-18)q\ln q-\frac{0.3(\sqrt[3]{kq\ln q}+1)^5}{q}\ge0
\end{align}
that implies $k>18$, $\sqrt[3]{k}>\sqrt[3]{18}\thickapprox2.6207$. Let $\epsilon>0$ be an arbitrary constant independent of $q$. Let $\V$ be a value such that
\begin{equation}
  0.302(\sqrt[3]{(18+\epsilon) q\ln q})^5\ge0.3(\sqrt[3]{(18+\epsilon) q\ln q}+1)^5\text{ if }q>\V.
\end{equation}
Considering the corresponding derivatives, it can be shown that $\V=1516750$.  So, we can
 consider the inequality
\begin{align}\label{eq3:without1}
(k-18)q\ln q-\frac{0.302(\sqrt[3]{kq\ln q}\,)^5}{q}\ge0,~q>\V=1516750,~ k >18.
\end{align}
If \eqref{eq3:without1} holds, then \eqref{eq3:boundB2forexplicit} holds also. We denote
\begin{align}\label{eq3:F(k,q)b}
& F(k,q)\triangleq\left(\frac{k-18}{0.302}\right)^3\frac{1}{k^5}-\frac{\ln^2 q}{q},~q>\V,~ k >18.
\end{align}
The inequality \eqref{eq3:without1}
is equivalent to $F(k,q)\ge0$.
The derivative of $F(k,q)$ with respect to $q$ is
\begin{align}\label{eq3:deriv_q}
 \frac{d}{dq}F(k,q)=\frac{\ln q(\ln q-2)}{q^2}>0,~q>\V.
\end{align}
So, $F(k,q)$ is an increasing function of $q$ for a fixed $k$ and $q>\V$. Therefore, for $q>\V$, the equation $F(k,q)=0$ with a fixed $k$ has only one solution with respect to $q$.
We denote this solution  $\W(k)$. As $\W(k)$ can be non-integer, below we use $\lceil\W(k)\rceil$.
By above, $\lceil\W(k)\rceil$ is the smallest integer $q$ satisfying \eqref{eq3:without1} for the fixed $k$. Thus, for $q\ge\lceil\W(k)\rceil\ge\W(k)$, the inequality \eqref{eq3:without1} holds under the condition $\lceil\W(k)\rceil>\V=1516750$.

By \eqref{eq3:F(k,q)b}, the equation $F(k,q)=0$ can be written in the form
\begin{align}\label{eq3:Fqk=0}
 \frac{\ln^2 q}{q}=\left(\frac{k-18}{0.302}\right)^3\frac{1}{k^5},~k>18.
\end{align}
Note that this equation is connected with Lambert W function, see e.g. \cite{Lambert}.

By above we have the theorem.

\begin{theorem}\label{th3:BoundC} \textbf{\emph{(explicit Bound C)}}
  Let $\W(k)$ be the solution with respect to  $q$ of the equation~\eqref{eq3:Fqk=0}. Let $k>18$ be such that $\W(k)>\V=1516750$.
  Let $w^\mathrm{C}_q(k)\triangleq \sqrt[3]{k q\ln q}+1$, $n^\mathrm{C}_{4,q}(k)\triangleq w^\mathrm{C}_q(k)+1=\sqrt[3]{k q\ln q}+2$.  Then
  \begin{align}\label{eq3:BoundC}
 s_q(2,3)=\ell_q(4,3)\le n^\mathrm{C}_{4,q}(k)=\sqrt[3]{k q\ln q}+2 \text{ for } q\ge\lceil\W(k)\rceil.
  \end{align}
  We  call $n^\mathrm{C}_{4,q}(k)$ explicit Bound  C.
\end{theorem}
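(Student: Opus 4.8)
The plan is to assemble the chain of estimates built up through Section~\ref{subsec:explicit} and to combine it with Lemma~\ref{lem3:BoundsB1B2}, the construction of Section~\ref{sec:2sat constr}, and Bound~B of Section~\ref{subsec:BoundB}. First I would observe that the hypotheses already fix the relevant range of $q$: since $\W(k)>\V=1516750$ and $q\ge\lceil\W(k)\rceil$, we have $q>\V$, hence also $q\ge q_0=10^5$, so the estimate $\#\U_{w+1}\le q^3f_q(w)$ (with $S_j^{\min}$ taken from \eqref{eq3:SwP a}), Lemma~\ref{lem3:BoundsB1B2}, and Bound~B \eqref{eq3:bndB_b} are all in force for $q$ as in the theorem.

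Next I would run the equivalences already displayed in Section~\ref{subsec:explicit}. By \eqref{eq3:deriv_q} the function $F(k,q)$ of \eqref{eq3:F(k,q)b} is strictly increasing in $q$ for $q>\V$, while $F(k,\W(k))=0$ by the definition of $\W(k)$ as the root of \eqref{eq3:Fqk=0}; hence $F(k,q)\ge0$ for every real $q\ge\W(k)$, and in particular for every integer $q\ge\lceil\W(k)\rceil$. By \eqref{eq3:F(k,q)b} this is the same as inequality \eqref{eq3:without1}; by the defining property of $\V$, namely $0.302(\sqrt[3]{kq\ln q})^5\ge0.3(\sqrt[3]{kq\ln q}+1)^5$ for $q>\V$ (applicable here since $k>18$), the inequality \eqref{eq3:without1} implies \eqref{eq3:boundB2forexplicit}, which is precisely \eqref{eq3:boundB} evaluated at the real value $w=w^\mathrm{C}_q(k)=\sqrt[3]{kq\ln q}+1$.

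Then I would pass from this real value to an integer step index. The left-hand side $(w-1)^3-0.3w^5/q$ of \eqref{eq3:boundB} has derivative $3(w-1)^2-1.5w^4/q>0$ throughout $3\le w\le\sqrt q$, and $w^\mathrm{C}_q(k)=\sqrt[3]{kq\ln q}+1=o(\sqrt q)$ lies well inside this range for $q>\V$, where it also meets the side condition $2\binom{w}{2}-1\le q$. Consequently the smallest integer $w^\mathrm{B}_q$ that satisfies \eqref{eq3:boundB} under that side condition obeys $w^\mathrm{B}_q\le w^\mathrm{C}_q(k)$, the rounding being absorbed into the ``$+1$'' built into the definition of $w^\mathrm{C}_q(k)$. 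Hence, by Bound~B \eqref{eq3:bndB_b} and the correspondence $s_q(2,3)=\ell_q(4,3)$ recalled in Section~\ref{sec:Intr}, $\ell_q(4,3)\le n^\mathrm{B}_{4,q}=w^\mathrm{B}_q+1\le w^\mathrm{C}_q(k)+1=n^\mathrm{C}_{4,q}(k)=\sqrt[3]{kq\ln q}+2$, which is \eqref{eq3:BoundC}.

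I expect the main obstacle to be exactly the bookkeeping in the last paragraph. One is forced to work with the real surrogate $\sqrt[3]{kq\ln q}+1$ because that is what can be solved for in closed form through the Lambert-type equation \eqref{eq3:Fqk=0}, whereas the iterative construction of Section~\ref{sec:2sat constr} consumes an integer number of steps; so one must check that rounding to an integer breaks neither \eqref{eq3:boundB} (which is where monotonicity of its left-hand side in $w$ on $[3,\sqrt q]$ enters) nor the side condition $2\binom{w}{2}-1\le q$, and that $\lceil\W(k)\rceil$ indeed exceeds both $\V=1516750$ and $q_0=10^5$. All the heavier analytic ingredients --- the value $\V=1516750$, the value $q_0=10^5$ coming from the positivity of $\delta(q)$, and the monotonicity of $F(k,q)$ in $q$ --- are already established in the excerpt and are merely invoked.
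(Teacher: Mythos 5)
Your proposal is correct and follows essentially the same route as the paper, whose proof of Theorem~\ref{th3:BoundC} is just the chain already displayed in Section~\ref{subsec:explicit}: substitute $w=\sqrt[3]{kq\ln q}+1$ into \eqref{eq3:boundB}, pass to \eqref{eq3:without1} via the definition of $\V$, rewrite it as $F(k,q)\ge0$, and use the monotonicity \eqref{eq3:deriv_q} together with the root $\W(k)$ of \eqref{eq3:Fqk=0} to conclude for all $q\ge\lceil\W(k)\rceil$, finishing with Lemma~\ref{lem3:BoundsB1B2} and the saturating-set/code correspondence. Your explicit treatment of the real-versus-integer step index is more careful than the paper's (which simply works with the real surrogate), and the residual $O(1)$ rounding slack you absorb into the ``$+1$'' is the same slack implicitly present in the paper's own argument.
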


\begin{example}
  Some values of $\lceil\W(k)\rceil$ are given in Table~\ref{tab1}.  Also, in the 3-rd column of the table the values of $n^\text{C}_{4,q}(k)/\sqrt[3]{ q\ln q}$ are written such that
\begin{align}\label{eq3:cnew}
 \frac{n^\text{C}_{4,q}(k)}{\sqrt[3]{ q\ln q}}=\sqrt[3]{k}+\frac{2}{\sqrt[3]{ q\ln q}},~q=\lceil\W(k)\rceil.
\end{align}
They are useful, in particular, for comparison with the known results, see Section \ref{sec:compare}, the 4-th and 5-th columns of Table \ref{tab1}, and Figure~\ref{fig:LongBnd}, where the values of $n^\text{C}_{4,q}(k)/\sqrt[3]{ q\ln q}$ with $k=20.339,20,19.7$ are presented.
\begin{table}[h]
\begin{center}
\caption{Values of $\lceil\W(k)\rceil$ for $18.0001\le k\le 20.340$  and values of
$n^\text{C}_{4,q}(k)/\sqrt[3]{ q\ln q}$, $n^{knw}_{4,q}/\sqrt[3]{q\ln q}$ (the known bound), $n^{knw}_{4,q}/n^\text{C}_{4,q}(k)$ for $q=\lceil\W(k)\rceil$, $18.0001\le k\le 20.339$. ($\V=1516750$)}
\begin{tabular}{l|r|c|c|c}
  \hline
\label{tab1}
  $k$ & $\lceil\W(k)\rceil$ \phantom{\hspace{2cm}}&$\frac{n^\text{C}_{4,q}(k)}{\sqrt[3]{ q\ln q}}$&
  $\frac{n^{knw}_{4,q}}{\sqrt[3]{q\ln q}}$&$\frac{n^{knw}_{4,q}}{n^\text{C}_{4,q}(k)}$ \\\hline
  20.340&$1515738\thickapprox1.516\cdot10^{6}<\V$& &&\\\hline
  20.339&$1517567\thickapprox1.518\cdot10^{6}>\V$& 2.7368&5.2500& 1.9183\\
  20.335&$1524915\thickapprox1.525\cdot10^{6}>\V$&2.7367&5.2495& 1.9182\\
  20&$2374364\thickapprox2.374\cdot10^{6}>\V$&2.7205&5.2087& 1.9146\\
  19.7&$ 3820987\thickapprox3.821\cdot10^{6}>\V$&2.7059&5.1716&1.9112\\
  19&$19178705\thickapprox1.918\cdot10^{7}>\V$&2.6713&5.0828&1.9027\\
  18.5&$171670620\thickapprox1.717\cdot10^{8}>\V$&2.6461&5.0180&1.8963\\
  18.1&$30640000001\thickapprox3.064\cdot10^{10}>\V$& 2.6258&4.9659&1.8912\\
  18.05&$294427001643\thickapprox2.944\cdot10^{11}>\V$&2.6233&4.9593&1.8905\\
  18.01&$52060446118120\thickapprox5.206\cdot10^{13}>\V$&2.6212&4.9542&1.8900\\
  18.001&$\thickapprox7.880\cdot 10^{16}>\V$&2.6208&4.9530&1.8899\\
  18.0001&$\thickapprox1.109\cdot 10^{20}>\V$&2.6207&4.9529&1.8899\\  \hline
\end{tabular}
\end{center}
\end{table}
\end{example}

 The derivative of $F(k,q)$ with respect to $k$ is
\begin{align}\label{eq3:deriv_k}
 &\frac{d}{dk}F(k,q)=\frac{(k-18)^2}{0.302^3 k^5}\left(\frac{90}{k}-2\right)>0, ~18 < k\le 20.339 .
\end{align}

By \eqref{eq3:deriv_k}, $F(k, q)$ is an increasing function of $k$ for a fixed $q$.
Let $F(k',q')=0$ and let $k''>k'$. Then  $F(k'',q')>0$. As, by \eqref{eq3:deriv_q}, $F(k,q)$ is an increasing function of $q$ for a fixed $k$,
there exists $q''<q'$ such that $F(k'',q'')=0$.
So, $\W(k)$  is a decreasing function of $k$ and, therefore, $\lceil W(k)\rceil$ is a non-increasing function of $k$.

As  $\lceil\W(k)\rceil$ should be $>\V$, by Table \ref{tab1} we are interested for $18.0001 \le k\le 20.339 $.

Finally, we have
\begin{align}\label{eq3:BoundCinf}
 \lim_{q\rightarrow\infty}\frac{n^\text{C}_{4,q}(k)}{\sqrt[3]{ q\ln q}}=\sqrt[3]{k},~20.339\ge k>18,~k\text{ fixed}.
\end{align}
\subsection{Asymptotic Bound D}\label{subsec:BoundD}

The inequality \eqref{eq3:without1} can be written in the form
\begin{align}\label{eq3:without1D}
q\ln q\left((k-18)-0.302k\sqrt[3]{\frac{k^2\ln^2 q}{q}}\right)\ge0,~q>\V,~18< k\le 20.339.
\end{align}
We have
\begin{align*}
 \lim_{q\rightarrow\infty}0.302k\sqrt[3]{\frac{k^2\ln^2 q}{q}}=0,~18< k\le 20.339.
\end{align*}
Therefore, for $q$ large enough, the inequality \eqref{eq3:without1D}  holds;
together with it, the inequalities \eqref{eq3:without1} and \eqref{eq3:boundB2forexplicit} hold also. Thus (see the beginning
 of Section~\ref{subsec:explicit}),
for $w=\sqrt[3]{k q\ln q}+1$, $18< k\le 20.339$, the inequality \eqref{eq3:boundB} holds if $q$ is large enough. We take $k=18+\varepsilon$ where $\varepsilon>0$ is an arbitrary small constant independent of~$q$. Let $w^{\text{D}}_q\triangleq\sqrt[3]{(18+\varepsilon) q\ln q}+1$. Then
  \begin{align}\label{eq3:BoundD}
 s_q(2,3)=\ell_q(4,3)\le n^\mathrm{D}_{4,q}\triangleq w^\mathrm{D}_q+1=\sqrt[3]{(18+\varepsilon) q\ln q}+2 \text{ for } q\text{ large enough}.
  \end{align}
  We  call $n^\mathrm{D}_{4,q}$ asymptotic Bound D. We have
  \begin{align}\label{eq3:BoundDinf}
 \lim_{q\rightarrow\infty,\,\varepsilon\rightarrow0}\frac{n^\text{D}_{4,q}}{\sqrt[3]{ q\ln q}}=\sqrt[3]{18}\thickapprox2.6207.
\end{align}

 \subsection{Bound E for relatively small $q$}\label{subsecComput}
 In \cite{BDMP-arXivR3_2018,DMP-R=3Redun2019,DMP-ICCSA2020,DMP-AMC2021,DavOst-DESI2010}, see also the references therein, small 2-saturating sets in $\PG(3,q)$ for the region $13\le q\le7577$ are obtained by computer search using the so-called ``algorithms with the fixed order of points (FOP)'' and ``randomized greedy algorithms''. These algorithms are described in detail in \cite{BDMP-arXivR3_2018,DMP-R=3Redun2019}.

In this paper, we continue the computer search and obtain new small 2-saturating sets  in $\PG(3,q)$ for the region $7577<q\le7949$.

We denote by $\overline{s}_q(3,2)$ the size of the smallest known 2-saturating sets in $\PG(3,q)$. The sets obtained in \cite{BDMP-arXivR3_2018,DMP-R=3Redun2019,DMP-ICCSA2020,DMP-AMC2021,DavOst-DESI2010} and in this paper (one set of \cite{Sonino} is used also)  provide the following theorem.

\begin{theorem}
In the projective space $\PG(3,q)$, for  the smallest size $s_q(3,2)$ of a $2$-saturating set the following upper bound holds:
\begin{equation}\label{eq3:comp_bnd}
s_q(3,2)\le\overline{s}_q(3,2)\le n^\mathrm{E}_{4,q}\triangleq c^{\mathrm{E}}_{4}\sqrt[3]{q\ln q},~
~c^{\mathrm{E}}_{4}= \left\{\begin{array}{@{}l@{}}
  2.61 \text{ if }  13\le q\le4373 \smallskip\\
  2.65\text{ if } 4373< q\le7723 \smallskip\\
  2.69\text{ if } 7723<q\le 7949
\end{array}
  \right..
\end{equation}
\end{theorem}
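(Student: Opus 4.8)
The plan is to assemble the claimed upper bound from explicit small $2$-saturating sets in $\PG(3,q)$ and then to verify, for every prime power $q$ in the three stated intervals, that the size of the best known such set does not exceed $c^{\mathrm{E}}_{4}\sqrt[3]{q\ln q}$ with the indicated constant. The first inequality $s_q(3,2)\le\overline{s}_q(3,2)$ is immediate from the definition of $\overline{s}_q(3,2)$ as the size of the smallest \emph{known} $2$-saturating set in $\PG(3,q)$, so only the second inequality requires work.

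First I would collect the data: for $13\le q\le7577$ the sets produced by the ``algorithms with the fixed order of points (FOP)'' and the ``randomized greedy algorithms'' of \cite{BDMP-arXivR3_2018,DMP-R=3Redun2019,DMP-ICCSA2020,DMP-AMC2021,DavOst-DESI2010}; for $7577<q\le7949$ the new sets obtained by the computer search reported in this paper; and the single additional set taken from \cite{Sonino}. For each prime power $q$ in $13\le q\le7949$ this fixes a concrete integer value $\overline{s}_q(3,2)$, and one then records the ratio $\overline{s}_q(3,2)/\sqrt[3]{q\ln q}$.

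Second, I would verify the three claimed thresholds by splitting the range into the subintervals $13\le q\le4373$, $4373<q\le7723$, and $7723<q\le7949$, and checking that the recorded ratio stays below $2.61$, $2.65$, and $2.69$ respectively on each. Since $\sqrt[3]{q\ln q}$ is monotone increasing in $q$ and the known set sizes grow essentially like this quantity, the verification is a finite check over all prime powers in each subinterval; the constant $c^{\mathrm{E}}_{4}$ is then simply a convenient round upper bound for the maximum of the ratio attained on that subinterval. This yields $\overline{s}_q(3,2)\le c^{\mathrm{E}}_{4}\sqrt[3]{q\ln q}=n^{\mathrm{E}}_{4,q}$ as asserted.

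The main obstacle is not conceptual but computational and one of bookkeeping: one must be confident that the underlying computer searches are correct, that the $2$-saturation property of every listed set has actually been certified (each point of $\PG(3,q)$ lies on a plane through three non-collinear points of the set), and that no prime power in the stated ranges has been omitted or mis-tabulated. The boundary cases near $q=4373$, $q=7723$, and $q=7949$, where the ratio comes closest to the chosen constant, deserve the closest scrutiny; elsewhere the margin is comfortable and the check is routine.
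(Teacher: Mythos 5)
Your proposal matches the paper's treatment exactly: the paper offers no written proof for this theorem, instead asserting that the computer-search data from the cited references, the new searches for $7577<q\le7949$, and the one set from \cite{Sonino} ``provide'' the bound, i.e.\ precisely the finite verification of the ratio $\overline{s}_q(3,2)/\sqrt[3]{q\ln q}$ against each constant on each subinterval that you describe. Your identification of the first inequality as definitional and of the remaining work as computational bookkeeping is correct.
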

For  illustration and  comparison, the Bound E in the form $n^\mathrm{E}_{4,q}/\sqrt[3]{q\ln q}$
is shown by the bottom curve in Figure \ref{fig:bound ShortBnd}, in the region  $13\le q<7949$.

\section{Codes with growing codimension $r=3t+1$, $t\ge1$}\label{sec:r=3t+1}

For upper bounds on the length function $\ell_q(r,3)$, $r=3t+1\ge7$, an important tool is given by the inductive lifting
constructions
of \cite{DMP-ICCSA2020,DavOst-DESI2010} which provide Proposition~\ref{prop7_induct_r=3t+1}.  These constructions are variants of $q^m$-concatenating constructions \cite{Dav90PIT,Dav95,DGMP-AMC,DavOst-IEEE2001,DavOst-DESI2010,DMP-AMC2021,DMP-IEEE2004,DMP-ICCSA2020}. We denote
\begin{align*}
 \Delta(r,q)\triangleq3\left\lfloor q^{(r-7)/3}\right\rfloor+2\left\lfloor q^{(r-10)/3}\right\rfloor+\delta_{r,13},~\delta_{i,j}\text{ is the Kronecker symbol}.
\end{align*}
\begin{proposition}\label{prop7_induct_r=3t+1}
 \emph{ \cite[Proposition 21]{DMP-ICCSA2020},\cite[Theorem 14]{DavOst-DESI2010}}
  Let an $ [n_{0},n_{0}-4]_{q}3$ code with $n_0<q$ exist. Then there is an infinite family of
$[n,n-r]_{q}3$ codes with parameters
\begin{align}\label{eq4:lift}
&  n=n_{0}q^{(r-4)/3}+\Delta(r,q),~ r=3t+1\geq 4,~t\ge1.
\end{align}
\end{proposition}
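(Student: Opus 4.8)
The statement is quoted from \cite[Proposition 21]{DMP-ICCSA2020} and \cite[Theorem 14]{DavOst-DESI2010}, so the plan is to reconstruct the induction on $t$ underlying those references rather than to invent a new argument. The base case $t=1$ (equivalently $r=4$) is immediate: here $\Delta(4,q)=0$ since $q^{(4-7)/3}=q^{-1}<1$, $q^{(4-10)/3}<1$, and $\delta_{4,13}=0$, so \eqref{eq4:lift} reduces to $n=n_0$, which is exactly the hypothesis that an $[n_0,n_0-4]_q3$ code exists. The real content is the inductive step, which is a single application of a $q^m$-concatenating (lifting) construction that raises the codimension by $3$ at the cost of multiplying the length by $q$ and adding a correction term.

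First I would recall the relevant $q^m$-concatenating construction in the form needed here: from an $[n',n'-r']_q3$ code (with a suitable technical condition on $n'$ relative to $q$, which is where the hypothesis $n_0<q$ gets used and propagated) one builds an $[N,N-(r'+3)]_q3$ code with $N=n'q+\Delta'$, where $\Delta'$ is the number of extra columns contributed by the ``tail'' blocks of the construction — concretely of the shape $3\lfloor q^{(r'+3-7)/3}\rfloor+2\lfloor q^{(r'+3-10)/3}\rfloor$ plus the Kronecker correction that appears only when $r'+3=13$. One checks that covering radius stays $3$: any syndrome in $\F_q^{\,r'+3}$ is split into a ``top'' part handled by the lifted copy of the starting code (using that it has covering radius $3$) and a ``bottom'' part handled by the appended blocks, and the two partial coverings are combined into a single covering by at most $3$ columns. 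This is the standard verification for concatenating constructions and I would cite \cite{Dav90PIT,DGMP-AMC,DavOst-DESI2010} for the mechanics rather than redo it.

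Second, I would run the induction. Assume the family exists for codimension $r=3t+1$ with length $n=n_0q^{(r-4)/3}+\Delta(r,q)$; apply the lifting step above with $r'=r$ to get a code of codimension $r'+3=3(t+1)+1$ and length
\begin{align*}
N&=nq+\Big(3\big\lfloor q^{(r+3-7)/3}\big\rfloor+2\big\lfloor q^{(r+3-10)/3}\big\rfloor+\delta_{r+3,13}\Big)\\
&=n_0q^{(r-4)/3+1}+\Delta(r,q)\,q+3\big\lfloor q^{(r-4)/3}\big\rfloor+2\big\lfloor q^{(r-7)/3}\big\rfloor+\delta_{r+3,13}.
\end{align*}
Then I would verify the bookkeeping identity $\Delta(r,q)\,q+3\lfloor q^{(r-4)/3}\rfloor+2\lfloor q^{(r-7)/3}\rfloor+\delta_{r+3,13}=\Delta(r+3,q)$, i.e. that
\begin{align*}
\Big(3\big\lfloor q^{(r-7)/3}\big\rfloor+2\big\lfloor q^{(r-10)/3}\big\rfloor+\delta_{r,13}\Big)q+3\big\lfloor q^{(r-4)/3}\big\rfloor+2\big\lfloor q^{(r-7)/3}\big\rfloor+\delta_{r+3,13}
\end{align*}
collapses to $3\lfloor q^{(r-4)/3}\rfloor+2\lfloor q^{(r-7)/3}\rfloor+\delta_{r+3,13}$ once one uses $q^{(r-7)/3}\cdot q=q^{(r-4)/3}$, $q^{(r-10)/3}\cdot q=q^{(r-7)/3}$, the fact that $q^{(r-7)/3}$ is a genuine power of $q$ (since $r=3t+1$) so the floor multiplies cleanly, and that for $t\ge1$ the terms produced at stage $r$ exactly cancel against new terms rather than accumulating; the Kronecker terms are handled by noting $\delta_{r,13}$ contributes only at the single transition $r=13\to16$ and that $q^{(13-4)/3}=q^3$ is already an integer so the $\delta$ is absorbed. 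Substituting this identity gives $N=n_0q^{(r+3-4)/3}+\Delta(r+3,q)$, completing the induction.

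The main obstacle is not conceptual but the discrete bookkeeping with the floor functions and the Kronecker symbol: one must make sure the ``$+\delta_{r,13}$'' and the interaction of $\lfloor\cdot\rfloor$ with multiplication by $q$ telescope correctly so that $\Delta(r,q)$ has exactly the closed form claimed for every $r=3t+1$, and in particular that no extra $+1$'s leak in at the $r=13$ step. I would also need to track the side condition ensuring the concatenating construction applies at each stage (the original requires the current length to be small enough relative to the current power of $q$); since the starting condition is the rather strong $n_0<q$ and each step multiplies the relevant power of $q$ faster than it multiplies the length plus correction, this condition is preserved, but it should be stated explicitly in the induction hypothesis. Given that the result is quoted verbatim from the cited papers, the cleanest write-up simply reduces the base case to the hypothesis and refers to \cite{DMP-ICCSA2020,DavOst-DESI2010} for the inductive lifting step and the $\Delta(r,q)$ computation.
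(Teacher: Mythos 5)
Your base case is fine, but the inductive step contains a genuine arithmetic error that sinks the whole framing. The identity you propose to ``verify'', namely
\begin{equation*}
\Delta(r,q)\,q+3\bigl\lfloor q^{(r-4)/3}\bigr\rfloor+2\bigl\lfloor q^{(r-7)/3}\bigr\rfloor+\delta_{r+3,13}=\Delta(r+3,q),
\end{equation*}
is false: by the definition of $\Delta$, the last three summands on the left already equal $\Delta(r+3,q)$, so the identity would force $q\,\Delta(r,q)=0$, which fails for every $r\ge7$. Concretely, for $r=7$ the left-hand side is $3q+(3q+2)=6q+2$ while the right-hand side is $\Delta(10,q)=3q+2$. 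There is no cancellation available --- every term is nonnegative --- so your induction actually produces codes of length $n_0q^{(r-1)/3}+q\,\Delta(r,q)+\Delta(r+3,q)$, strictly longer than claimed.

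The failure is structural, not a repairable slip in bookkeeping: no iteration of one-step lifts of the form $n\mapsto nq+c_r$ with $c_r\ge0$ can reproduce the stated lengths, because $\Delta(16,q)-q\,\Delta(13,q)=3q^3+2q^2-(3q^3+2q^2+q)=-q<0$. The proposition is not an induction on $t$; it is a single, direct application of the $q^m$-concatenating construction with $m=(r-4)/3$, which lifts the codimension-$4$ starting code straight to codimension $r$ (multiplying its length by $q^{(r-4)/3}$) and appends the tail of total length $\Delta(r,q)$ once. That is also why the only hypothesis is $n_0<q$ --- a condition on the starting code, with no side conditions to propagate through intermediate codimensions. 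The paper itself gives no proof and simply quotes the result from \cite{DMP-ICCSA2020} and \cite{DavOst-DESI2010}; an honest write-up should either cite those references for the direct construction or reproduce it, but your telescoping induction cannot be fixed to yield the stated length.
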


\begin{corollary}\label{cor4}
  Let $n_0=n^\mathrm{\bullet}_{4,q}\in \{n^\mathrm{A}_{4,q},n^\mathrm{B}_{4,q},n^\mathrm{D}_{4,q},n^\mathrm{E}_{4,q}\}$ or $n_0= n^\mathrm{C}_{4,q}(k)$, where  $n^\mathrm{A}_{4,q}$, $n^\mathrm{B}_{4,q}$, $n^\mathrm{D}_{4,q}$, $n^\mathrm{E}_{4,q}$ ,and $n^\mathrm{C}_{4,q}(k)$ are given in Section \ref{sec:2sat}. Let the region of $q$ for $n^\mathrm{\bullet}_{4,q}$ and $n^\mathrm{C}_{4,q}(k)$ be as in Section \ref{sec:2sat}. Then, for the same region of $q$, there is an infinite family of $[n^\bullet_{r,q},n^\bullet_{r,q}-r]_{q}3$ or $[n^\mathrm{C}_{r,q}(k),n^\mathrm{C}_{r,q}(k)-r]_q3$ codes with $r=3t+1\geq 4$, $t\ge1$, and length
  \begin{align}\label{eq4:lift-b}
&  n^\bullet_{r,q}=n^\bullet_{4,q}q^{(r-4)/3}+\Delta(r,q),
~n^\mathrm{\bullet}_{4,q}\in \{n^\mathrm{A}_{4,q},n^\mathrm{B}_{4,q},n^\mathrm{E}_{4,q}\};\db\\
&  n^\mathrm{C}_{r,q}(k)=\sqrt[3]{k}\cdot q^{(r-3)/3}\cdot\sqrt[3]{\ln q}+2q^{(r-4)/3}+\Delta(r,q),~18 <k\le20.339,~  q\ge\lceil\W(k)\rceil;\label{eq4:liftBndC}\db\\
&n^\mathrm{D}_{r,q}(k)=\sqrt[3]{18+\varepsilon}\cdot q^{(r-3)/3}\cdot\sqrt[3]{\ln q}+2q^{(r-4)/3}\label{eq4:liftBndD}
+\Delta(r,q),~ q\text{ large enough}.
\end{align}
\end{corollary}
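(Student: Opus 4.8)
The plan is to deduce Corollary~\ref{cor4} as a direct application of the lift-construction of Proposition~\ref{prop7_induct_r=3t+1} to the short starting codes supplied by Section~\ref{sec:2sat}. By \eqref{eq3:BoundA2}, \eqref{eq3:bndB_b}, \eqref{eq3:BoundC}, \eqref{eq3:BoundD}, \eqref{eq3:comp_bnd} (together with Theorem~\ref{the2:d=4}), for every $q$ in the region indicated in Section~\ref{sec:2sat} there exists an $[n_0,n_0-4]_q3$ code with $n_0=n^\mathrm{\bullet}_{4,q}\in\{n^\mathrm{A}_{4,q},n^\mathrm{B}_{4,q},n^\mathrm{D}_{4,q},n^\mathrm{E}_{4,q}\}$, respectively with $n_0=n^\mathrm{C}_{4,q}(k)$ (appending columns to the parity-check matrix, which preserves the codimension and does not raise the covering radius, lets us reach length exactly $n_0$ when $n_0$ is an integer; when $n_0$ is not an integer the length equalities stated below should be read as upper bounds, which is all that is needed for the length-function estimates). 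Since Proposition~\ref{prop7_induct_r=3t+1} requires only that such a code exist and that $n_0<q$, the sole preliminary is to verify the inequality $n_0<q$.

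First I would check $n_0<q$ region by region. Each of the five estimates of Section~\ref{sec:2sat} is $O(\sqrt[3]{q\ln q})=o(q)$; concretely $n_0\le c\sqrt[3]{q\ln q}+2$ with $c\le3.4$ on the range where the bound is stated (Bound~A, which attains about $3.38$, being the largest, while Bounds~B,~C,~D,~E stay below $3$). Thus $n_0<q$ reduces to $c^3\ln q<q^2(1-2/q)^3$, which holds on each range because $\ln q$ grows like a logarithm while the right-hand side grows like $q^2$; the tightest endpoint is $q=13$ for Bound~E, and the inequality is even more comfortable for Bound~A ($q\ge7951$), Bound~B ($q>10^5$), Bound~C ($q\ge\lceil\W(k)\rceil>\V=1516750$, $k\le20.339$) and Bound~D ($q$ large enough). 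In particular $n^\mathrm{C}_{4,q}(k)=\sqrt[3]{kq\ln q}+2<q$, and for Bound~E the actual starting length $\overline{s}_q(3,2)\le n^\mathrm{E}_{4,q}$ is also $<q$.

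Next I would invoke Proposition~\ref{prop7_induct_r=3t+1}: for each admissible $q$ it produces an infinite family of $[n,n-r]_q3$ codes with $n=n_0q^{(r-4)/3}+\Delta(r,q)$, $r=3t+1\ge4$, $t\ge1$ (the member with $t=1$ is the starting code itself, since then $q^{(r-4)/3}=1$ and $\Delta(4,q)=0$). Choosing $n_0=n^\mathrm{\bullet}_{4,q}$ with $n^\mathrm{\bullet}_{4,q}\in\{n^\mathrm{A}_{4,q},n^\mathrm{B}_{4,q},n^\mathrm{E}_{4,q}\}$ gives \eqref{eq4:lift-b}. Choosing $n_0=n^\mathrm{C}_{4,q}(k)=\sqrt[3]{kq\ln q}+2$ and using
\begin{equation*}
\sqrt[3]{kq\ln q}\cdot q^{(r-4)/3}=\sqrt[3]{k}\cdot q^{(r-4)/3+1/3}\cdot\sqrt[3]{\ln q}=\sqrt[3]{k}\cdot q^{(r-3)/3}\cdot\sqrt[3]{\ln q}
\end{equation*}
gives \eqref{eq4:liftBndC}; the choice $k=18+\varepsilon$, i.e. $n_0=n^\mathrm{D}_{4,q}$, yields \eqref{eq4:liftBndD} in the same way.

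The argument presents no genuine obstacle: all the substance lies in Section~\ref{sec:2sat} (the construction and size estimates of the starting 2-saturating sets) and in Proposition~\ref{prop7_induct_r=3t+1} (the $q^m$-concatenating lift), both already established. The only place calling for even a line of care is the hypothesis $n_0<q$ of Proposition~\ref{prop7_induct_r=3t+1}, and this, as shown above, is the routine comparison of $\ln q$ with $q^2$.
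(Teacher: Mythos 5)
Your proposal is correct and follows essentially the same route as the paper: the paper's own proof consists precisely of noting that $n_0=n^\mathrm{\bullet}_{4,q}<q$ and $n_0=n^\mathrm{C}_{4,q}(k)<q$ by the estimates of Section~\ref{sec:2sat}, so that Proposition~\ref{prop7_induct_r=3t+1} applies directly. Your additional details (the region-by-region check of $n_0<q$, the algebraic identity $\sqrt[3]{kq\ln q}\cdot q^{(r-4)/3}=\sqrt[3]{k}\,q^{(r-3)/3}\sqrt[3]{\ln q}$, and the remark on non-integer bounds) merely make explicit what the paper leaves implicit.
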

\begin{proof}
  By Section \ref{sec:2sat}, for all $n^\mathrm{A}_{4,q},n^\mathrm{B}_{4,q},n^\mathrm{C}_{4,q}(k),n^\mathrm{D}_{4,q},n^\mathrm{E}_{4,q}$ we have $n_0=n^\mathrm{\bullet}_{4,q}<q$ and $n_0= n^\mathrm{C}_{4,q}(k)<q$.
\end{proof}
By \eqref{eq3:BoundCinf}, \eqref{eq3:BoundDinf}, \eqref{eq4:liftBndC}, \eqref{eq4:liftBndD}, for $r=3t+1\geq 4,~t\ge2$, we have
\begin{align}
&\lim_{q\rightarrow\infty}\frac{n^\text{C}_{r,q}(k)}{q^{(r-3)/3}\cdot\sqrt[3]{\ln q}}=
\lim_{q\rightarrow\infty}\frac{n^\text{C}_{4,q}(k)}{q^{(4-3)/3}\cdot\sqrt[3]{\ln q}}=\sqrt[3]{k},~18< k \le 20.339,~k\text{ fixed};\dbn\\
& \lim_{q\rightarrow\infty,\,\varepsilon\rightarrow0}\frac{n^\text{D}_{r,q}}{q^{(r-3)/3}\cdot\sqrt[3]{\ln q}}=\lim_{q\rightarrow\infty,\,\varepsilon\rightarrow0}\frac{n^\text{D}_{4,q}}{q^{(4-3)/3}\cdot\sqrt[3]{\ln q}}=\sqrt[3]{18}\thickapprox2.6207.\label{eq4:lqr3D}
\end{align}

\section{Comparison with the known results}\label{sec:compare}
As far as the authors know, the best known bounds on $ \ell_q(3t+1,3)$ are given in \cite{DMP-AMC2021} where for $R=3$, $r=3t+1$, $t\ge1$, the following results are obtained:
\begin{align}
& \ell_q(4,3)\le\Omega_{\lambda,3}(q)\sqrt[3]{q\ln q}+6,~
\ell_q(r,3)\le(\Omega_{\lambda,3}(q)\sqrt[3]{q\ln q}+6)q^{\frac{r-4}{3}}+3\theta_{t-1,q},\label{eq5:lqr3}\db\\
&\Omega_{\lambda,3}(q)\triangleq\lambda+\frac{36}{\beta_{\lambda,3}^2(q)
\left(2-\frac{1}{q}-\Upsilon_{\lambda,3}(q)\right)},~\Upsilon_{\lambda,3}(q)\triangleq\frac{\lambda^2}{2}\sqrt[3]{\frac{\ln^2 q}{q}},~\beta_{\lambda,3}(q)\triangleq\lambda-\frac{2}{\sqrt[3]{q\ln q}}.\notag
\end{align}
Here $\lambda > 0$ is a positive constant independent of $q$, its value can be assigned
arbitrarily. The bounds~\eqref{eq5:lqr3} hold if $q>\lceil y\rceil$ where $y$ is a solution of the equation $\Upsilon_{\lambda,3}(y)=1$ under the condition $y > e^2$ \cite[(3.7), Remark 6.4]{DMP-AMC2021}.

In \cite[Section~6, Table 1]{DMP-AMC2021}, it is shown that $\lambda=\sqrt[3]{36}$ minimizes
 $\Omega_{\lambda,3}$;  for this $\lambda$, the bounds~\eqref{eq5:lqr3} hold if $q\ge14983$. We denote
 \begin{align}\label{eq5:lqr3a}
 &n^{knw}_{4,q}\triangleq\Omega_{\sqrt[3]{36},3}(q)\sqrt[3]{q\ln q}+6,
 ~n^{knw}_{r,q}\triangleq n^{knw}_{4,q}\cdot q^{\frac{r-4}{3}}+3\theta_{t-1,q},~r=3t+1,~t\ge1,
 \end{align}
 where ``$knw$'' notes the known results. For  illustration and  comparison, the known bound $n^{knw}_{4,q}$ in the form $n^{knw}_{4,q}/\sqrt[3]{q\ln q}$
is shown by the top curve in Figure \ref{fig:bound ShortBnd}, for $14983\le q<10^5$, and Figure \ref{fig:LongBnd}, for $10^5<q<5\cdot 10^6$. By Figure~\ref{fig:bound ShortBnd}, for $14983\le q<10^5$, the known bound  takes values $6.80\gtrapprox n^{knw}_{4,q}/\sqrt[3]{q\ln q}\gtrapprox5.75$. By Figure~\ref{fig:LongBnd}, for $10^5< q<5\cdot10^6$, the known bound takes values $5.75\gtrapprox n^{knw}_{4,q}/\sqrt[3]{q\ln q}\gtrapprox5.2$.

The ratio $n^{knw}_{4,q}/n^\mathrm{A}_{4,q}$ of the known upper bound $n^{knw}_{4,q}$ \eqref{eq5:lqr3}--\eqref{eq5:lqr3a} on the length function $\ell_q(4,3)$ and the new one  $n^\mathrm{A}_{4,q}$ in the region $14983\le q<5\cdot10^6$ is shown in Figure \ref{fig:ratio}, where the ratio lies in $\thickapprox2.167\ldots\thickapprox1.96$. The ratio $n^{knw}_{4,q}/n^\mathrm{C}_{4,q}(k)$ of the known bound  and the new one  $n^\mathrm{C}_{4,q}(k)$ in the region $1517567<q$ is shown in Table  \ref{tab1} where it lies in $\thickapprox1.918\ldots\thickapprox1.89$.
\begin{figure}[h]
 \includegraphics[width=\textwidth]{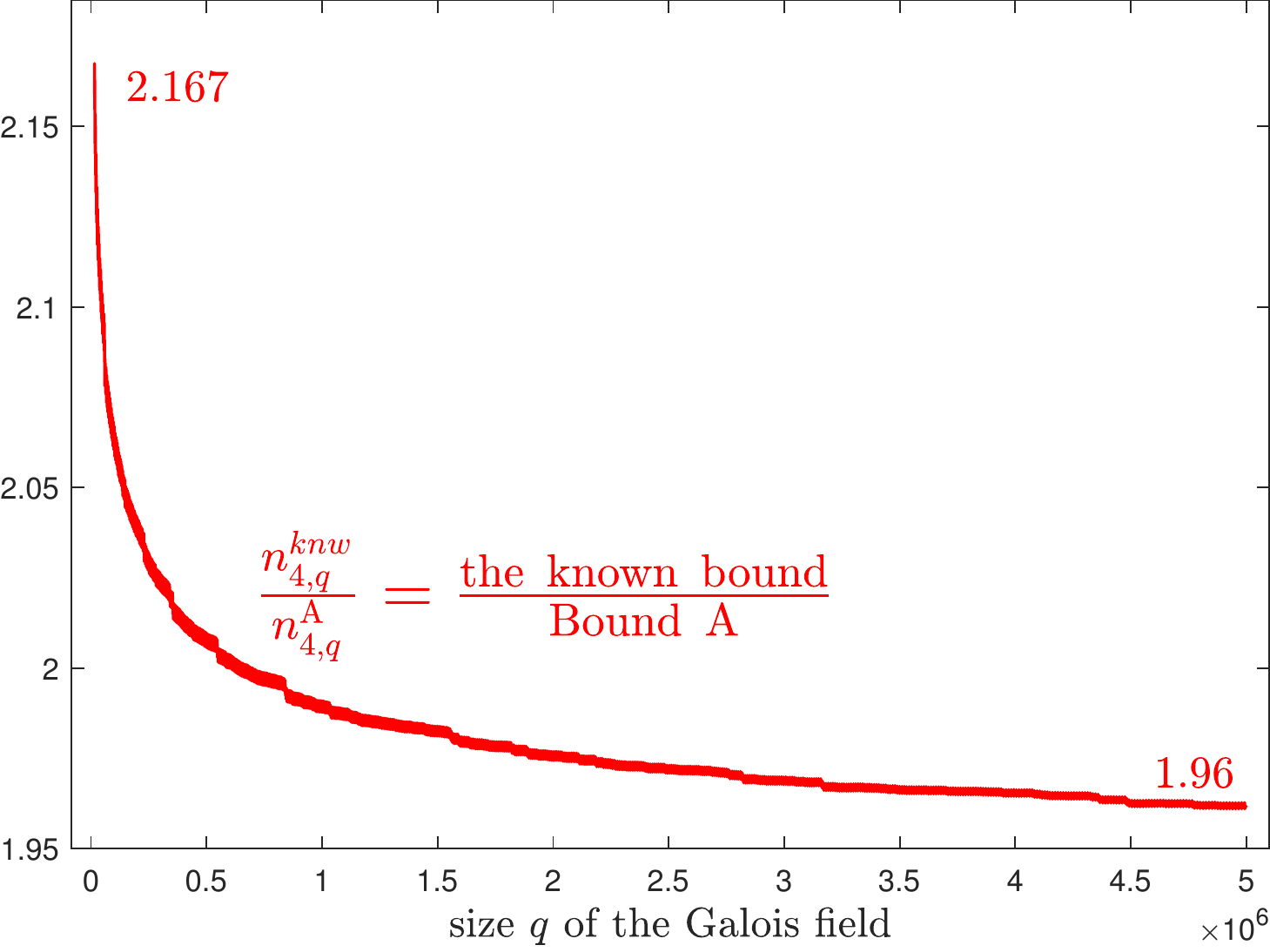}
\caption{The ratio $n^{knw}_{4,q}/n^\mathrm{A}_{4,q}$ of the known \cite{DMP-AMC2021} upper bound $n^{knw}_{4,q}$ \eqref{eq5:lqr3}--\eqref{eq5:lqr3a} on the length function
$\ell_q(4,3)$ and the new one  $n^\mathrm{A}_{4,q}$  \eqref{eq3:BoundA}--\eqref{eq3:BoundA2}, $14983\le q<5\cdot10^6$ }
 \label{fig:ratio}
\end{figure}

 By Table  \ref{tab1}, \eqref{eq4:lqr3D}, \eqref{eq5:lqr3}, for asymptotic estimates we have
\begin{align}\label{eq5:lqr3c}
&\lim_{q\rightarrow\infty}\frac{n^{knw}_{4,q}}{\sqrt[3]{q\ln q}}=
\lim_{q\rightarrow\infty}\frac{n^{knw}_{r,q}}{q^{(r-3)/3}\cdot\sqrt[3]{\ln q}}=\frac{3}{2}\sqrt[3]{36}\thickapprox 4.953;\db\\
&\lim_{q\rightarrow\infty,\,\varepsilon\rightarrow0}\frac{n^{knw}_{4,q}}{n^\text{D}_{4,q}}\thickapprox1.5\sqrt[3]{2}\thickapprox1.8899.\label{eq5:inf ratio}
\end{align}

By Figures \ref{fig:bound ShortBnd}--\ref{fig:ratio}, Table \ref{tab1}, and \eqref{eq5:inf ratio},
 one sees that the new bounds are essentially better than the known ones.

 \end{document}